\newtheorem{thm}{Theorem}[section]
\newtheorem{lem}[thm]{Lemma}
\newtheorem{prop}[thm]{Proposition}
\newtheorem{cor}[thm]{Corollary}
\theoremstyle{definition}
\newtheorem{defn}[thm]{Definition}
\newtheorem{obs}{Observation}
\newtheorem{conv}[thm]{Convention}
\newcommand{\insertimage}[2]
	   {\includegraphics[width=#2\textwidth]{#1}}
\newcommand{\define}[1]{\emph{#1}}
\newcommand{\mpo}{{\mp 1}}
\newcommand{\mo}{{-1}}
\newcommand{\pmo}{{\pm 1}}
\newcommand{\Z}{\mathbb{Z}}
\newcommand{\bra}{\langle}
\newcommand{\kett}{\rangle}
\newcommand{\ol}{\overline}
\newcommand{\N}{\mathbb{N}}
\newcommand{\bk}[1]{{\bra #1 \kett}}
\newcommand{\bel}[1]{\begin{equation}\label{#1}}
\newcommand{\be}{\begin{equation}}
\newcommand{\ee}{\end{equation}}
\newcommand{\tr}{\textrm}
\newcommand{\wt}{\widetilde}
\newcommand{\fix}[2]{\textrm{Fix}_{#1}(#2)}
\newcommand{\twocomplx}[1]{\mathscr{#1}}
\newcommand{\bouquet}[1]{\mathcal{B}_{#1}}
\newcommand{\graphofgroups}[1]{\mathcal{G}({#1})}
\newcommand{\diam}[1]{\textrm{diameter}(#1)}
\newcommand{\per}[2]{\textrm{period}_{#1}(#2)}
\newcommand{\odiag}[1]{\mathcal{D}_{#1}}
\renewcommand{\H}{\mathbb{H}}
\newcommand{\T}{T_{\graphofgroups{\Gamma}}}
\newcommand{\func}{\mathfrak{n}}
\newcommand{\refine}[2]{\twocomplx{#1}(\odiag{#2})}
\newcommand{\res}[1]{\rho_{\odiag{f}}}
\newcommand{\alleqns}[3]{\mathcal{E}(#1,#2,#3)}
\title{Bulitko's Lemma for acylindrical splittings}
\author{Nicholas W.M. Touikan\footnote{Supported by NSERC PDF}\\Centre Interuniversitaire de Recherche en G\'eom\'etrie et Topologie \\ Universit\'e du Qu\'ebec \`a Montr\'eal \\email:
  \texttt{nicholas.touikan@gmail.com}}
\begin{document}
\maketitle
\abstract{We generalize Bulitko's Lemma to equations over (or
  homomorphisms into) groups that have $\kappa$-acylindrical
  splittings. This is a key technical lemma used in the author's
  algorithm to find tracks in 2-complexes.}
\section{Introduction}
In \cite{Bulitko} Bulitko proved what is now known as Bulitko's Lemma which we restate:

\begin{thm}[Bulitko's Lemma] For a system of equations $S(X,A)$ over a
  free group $F(A)$, there exists a computable number $n(|S(X,A)|)$
  such that for any cyclically reduced word $p \in F(A)$ and any
  solution $\phi$ of $S(X,A)$ there exists another solution $\phi^*$
  such that for each $x \in X$ every maximal subword of the form $p^m$
  in the freely reduced word corresponding to $\phi^*(x)$ has
  periodicity $|m|$ bounded above by $n(|S(X,A|)$.
  \end{thm}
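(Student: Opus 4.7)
The plan is to encode a solution $\phi$ of $S(X,A)$ as a reduced Van Kampen--style combinatorial diagram in which each equation of $S$ contributes one closed loop labelled by $\phi(x_1)^{\pm 1},\ldots,\phi(x_k)^{\pm 1}$ and constants from $A$, with maximal cancellation pairings drawn in as chords. Every letter in every image $\phi(x)$ either survives in the reduced word read around some equation, contributing only $O(|S(X,A)|)$ surviving letters in total, or is paired with a letter from another occurrence of a (possibly different) variable. This yields a partition of each $\phi(x)$ into at most $O(|S(X,A)|)$ arcs, together with an involutive matching of arcs that identifies each with the (inverse of an) arc elsewhere in the diagram.

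The next step is to localise this structure to a long maximal subword of the form $p^m$ appearing in some $\phi(x)$. The partition cuts $p^m$ into at most $O(|S(X,A)|)$ consecutive sub-arcs, and each sub-arc is matched either to a piece of a constant word from $A$ (which forces a uniform length bound on how many copies of $p$ it can cover) or to an arc inside some other $\phi(y)$, forcing that arc to inherit a $p^{\pm 1}$-periodic structure. Propagating this across the diagram reveals a bounded system of parallel periodic strips, each cut into the same bounded number of sub-arcs; the combinatorics of these cuts is determined by their residues modulo $|p|$.

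The shortening move is a pigeonhole on these residues. If $m$ exceeds a threshold $n(|S(X,A)|)$ computed from the bounds above, two cut points along one strip share the same residue modulo $|p|$, and excising the integer number of copies of $p$ between them, \emph{simultaneously in every parallel strip}, produces a new labelling $\phi^*$. One then checks that the excised subwords are matched consistently across the diagram so that $\phi^*$ still solves $S(X,A)$, that $\phi^*$ remains freely reduced since what was removed is a whole number of copies of $p$ inside a $p$-periodic region, and that the maximal $p$-periodicity at the target location strictly drops. Iteration terminates because total length strictly decreases, and the final $\phi^*$ has all $p$-periods bounded by $n(|S(X,A)|)$.

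The main obstacle is the global consistency of the excision. Because matched arcs live in different images $\phi(y)$, the pigeonhole must produce cut points whose residues agree across \emph{all} parallel strips at once, not merely in one. This reduces to showing that the data of ``cut-point residues modulo $|p|$'' is controlled by at most $O(|S(X,A)|)$ parameters, so that a single pigeonhole applied to large enough $m$ yields compatible cuts everywhere in the diagram. This combinatorial bookkeeping is the heart of Bulitko's argument, and it is precisely the part that must be reworked when one replaces the free-group Van Kampen diagram by the $\kappa$-acylindrical splitting setup of this paper.
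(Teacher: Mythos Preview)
The paper does not give a direct proof of this statement; it is quoted in the introduction as Bulitko's original result, and is then recovered as the special case of Theorem~\ref{thm:main} in which $F(A)$ is viewed as a multiple HNN extension of the trivial group (so $\kappa=0$). Your proposal, by contrast, sketches a direct proof in the spirit of the classical argument. So the comparison is between your pigeonhole--and--shorten approach and the paper's linear-equations approach as specialised to the free case.

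The two routes diverge at the step where one actually bounds the exponents. You argue by iteration: find a long $p^m$, use the band structure to locate $O(|S|)$ cut points on parallel $p$-periodic strips, apply pigeonhole on residues modulo $|p|$ to find a compatible excision, shorten, and repeat. The paper instead records, for each triangle relator, the relations between the $p$-stable exponents forced by the tripod geometry (equations of types (0)--(4) in the proof of Theorem~\ref{thm:main}), obtaining a finite system of linear equations in positive integers with coefficients $1$ and constants bounded by $2(\kappa+4)$; any positive integral solution yields a homomorphism via surgery on $p$-bands (Corollary~\ref{cor:long-bands}), and a bounded solution exists because the number and shape of the equations is bounded in terms of $|Y|,|S|,\kappa$ alone. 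Your approach is more elementary and closer to Bulitko's original; the paper's approach is engineered so that the free-group case and the $\kappa$-acylindrical case are handled by the same machinery, with acylindricity entering only through Lemma~\ref{lem:periodic-intersection} and Lemma~\ref{lem:1-sides}.

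One caution about your sketch: the step ``the data of cut-point residues modulo $|p|$ is controlled by at most $O(|S(X,A)|)$ parameters'' is exactly where the work lies, and as written it is an assertion rather than an argument. In the paper's language this is the content of Proposition~\ref{prop:domination-bound} together with the observation that each triangle contributes at most one type-(2) equation and at most three type-(1) equations; you would need an analogous structural statement to make your pigeonhole apply simultaneously across all strips. This is not a fatal gap, but it is the place where the proposal stops being a proof and becomes a plan.
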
	
  This result is also known to hold for equations or homomorphisms into
  free products. An important application of this lemma is in
  Makanin's algorithm to solve systems of equations over free
  groups. We will generalize this result by replacing the free group $F(A)$
  above by a group $\H$ that splits as the fundamental group of a
  $\kappa$-acylindrical graph of groups $\graphofgroups{\Gamma}$ and
  where the element $p$ is replaced by some arbitrary hyperbolic
  element of $\H$ w.r.t. $\graphofgroups{\Gamma}$.

  Our main motivation is a key technical lemma in
  \cite{touikan-grushko}. Specifically, in order to perform an
  \emph{unmeasured periodic merger} we need an acylindrical Bulitko
  lemma. Because the proof is rather long (though not really deep) and
  may be of independent interest, it seems worthwhile to present the
  Acylindrical Bulitko Lemma as a separate result.

  Throughout this paper $\H$ shall designate a fixed group that is the
  fundamental group of a graph of groups $\graphofgroups{\Gamma}$ with
  underlying graph $\Gamma$. We shall call the decomposition of $\H$
  as the fundamental group of a graph of groups a
  \define{splitting}. Let $\T$ denote the corresponding Bass-Serre
  tree. We shall consider the action:
\[\left.\begin{array}{c} \H \times \T \rightarrow \T \\ (h,x) \mapsto
    h\cdot x \end{array}\right.\] to be implicit in the definition of $\T$. We moreover consider $\T$ to be a metric space with the combinatorial metric. For an arc $\rho \subset \T$ we denote its length by $|\rho|$.

\begin{defn} We will say the the splitting of $\H$ is
  \define{$\kappa$-acylindrical} if for all $h \in \H$
  \[\diam{\fix{\T}{h}} \leq \kappa\] where $\fix{\T}{h}$ denotes the subset
  of $\T$ that is fixed pointwise by the action of $h$. We will also
  say that \define{$\T$ is $\kappa$-acylindrical}.
\end{defn}

The goal of this paper is to generalize Bulitko's Lemma to equations over
groups that have $\kappa$-acylindrical splittings. The study of equations over groups is equivalent to the study of homomorphisms into groups, we consider the correspondence to be obvious and will use this latter approach throughout the paper.

\subsection{Statement of the Main Theorem}

In order to state the main result precisely we need some more
terminology. Let $p$ be some hyperbolic element of $\H$ (w.r.t. the
given splitting of $\H$) and let $\lambda\subset \T$ denote its axis.
Consider the set of segments \[\big\{\lambda' \subset \lambda \mid \lambda = \bigcup_{n
  \in \Z} p^n \lambda'\big\}. \] A minimal element of this set
w.r.t. inclusion is called a \define{fundamental domain} of $\lambda$.

For an element $h \in \H$, let $[v_0,h\cdot v_0]$ denote the geodesic
between $v_0$ and $h\cdot v_0$. Let $g \in \H$, if a segment
\[\sigma = [v_0,h\cdot v_0] \cap g\cdot \lambda\] is non-empty then we
call it an $\lambda$-periodic subsegment of $[v_0,h\cdot v_0]$. The
\define{$\lambda$-periodicity} of $\sigma$ is the integer\[ \lfloor
\frac{|\sigma|}{|\lambda_0|}\rfloor\] where $\lambda_0$ is a fundamental domain of
$\lambda$. We denote this $\per{l}{\sigma}$. We can now precisely state the
main theorem

\begin{thm}[Acylindrical Bulitko Lemma]\label{thm:main}
  There exists a computable function $\func:\N \times \N \times
  \N \rightarrow \N$ such that for any nontrivial homomorphism $\phi:
  G \rightarrow \H$; where the group $G$ has a finite
  presentation $\bk{Y \mid S}$ and the group $\H$ has a
  $\kappa$-acylindrical splitting with based Bass-Serre tree
  $(\T,t_0)$; and for any hyperbolic element in $p\in \mathbb{H}$
  (denote its axis $\lambda \subset \T)$, there exists a homomorphism
  $\phi^*: G \rightarrow \H$ such that for all $y \in Y$
  \begin{itemize}
  \item if $[t_0,\phi(y)\cdot t_o]$ has no $\lambda$-periodic subsegments, then
    $\phi(y)=\phi^*(y)$, and
  \item if $[t_0,\phi(y)\cdot t_o]$ has $\lambda$-periodic
    subsegments, then there is a bijective correspondence between the
    $\lambda$-periodic subsegments of $[t_0,\phi(y)\cdot t_o]$ and
    $[t_0,\phi^*(y)\cdot t_o]$, but the $\lambda$-periodicity of all
    the $\lambda$-periodic subsegments of $[t_0,\phi^*(y)\cdot t_o]$
    is at most $\func(|Y|,|S|,\kappa)$.
  \item for all $y \in Y$ the length of $[t_0,\phi^*(y)\cdot t_o]$ is
    at most the length of $[t_0,\phi(y)\cdot t_o]$.
  \end{itemize}
\end{thm}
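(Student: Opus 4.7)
The plan is to reduce the problem to finding a small solution of a system of linear Diophantine equations derived from the relators of $G$.

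First, for each generator $y \in Y$ I would decompose the geodesic $[t_0, \phi(y) \cdot t_0]$ into an alternating sequence of maximal $\lambda$-periodic subsegments and non-periodic \emph{connector} arcs. By $\kappa$-acylindricity, two distinct translates $g\lambda$ and $g'\lambda$ can overlap in a segment of length at most $\kappa + 2|\lambda_0|$ (otherwise a suitable commutator of conjugates of $p$ would fix an arc of length exceeding $\kappa$), so any $\lambda$-periodic subsegment whose $\lambda$-periodicity exceeds some threshold $N(\kappa, |\lambda_0|)$ has a uniquely determined supporting translate of $\lambda$. Short periodic subsegments are absorbed into the adjacent connectors at bounded cost, so I focus on the long ones.

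For each long maximal $\lambda$-periodic subsegment $\sigma$ supported on a translate $g \cdot \lambda$ and with original periodicity $m_\sigma$, I would introduce an integer variable $x_\sigma$ for a candidate new periodicity and define a candidate assignment $\phi^*_x$ on generators by keeping connectors unchanged and replacing $\sigma$ by the subsegment of $g\cdot \lambda$ of $\lambda$-periodicity $x_\sigma$ starting at the same endpoint (algebraically this amounts to multiplying the relevant subword of $\phi(y)$ by a conjugate $g p^{x_\sigma - m_\sigma} g^{-1}$). The condition that $\phi^*_x$ extends to a homomorphism $G \to \H$ is exactly that $\phi^*_x(s) = 1$ for every $s \in S$; writing each relator as a closed loop $[t_0, \phi(s)\cdot t_0]$ and using that a trivial word's loop must cancel completely, $\kappa$-acylindricity forces every long periodic subsegment of the loop to cancel against another long periodic subsegment on the \emph{same} translate of $\lambda$, yielding a linear equation $\sum \pm x_{\sigma_i} = c$ with coefficients in $\{-1,+1\}$ and integer constant $c$ recording the bounded boundary contributions from the connectors and from absorbed short periodic subsegments. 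Collecting these over $s \in S$ produces a linear system $\mathcal{E}$ of bounded size, of which $(m_\sigma)$ is the distinguished solution.

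Since $\mathcal{E}$ has integer coefficients of bounded size and at most $O(|S|)$ equations in $O(|Y|)$ variables, a standard Bulitko-style pigeonhole on residues (or equivalently a Hermite-normal-form computation) produces a solution $(x^*_\sigma)$ with $|x^*_\sigma|$ bounded by a computable function $\func(|Y|, |S|, \kappa)$. To arrange the second and third bullets I would set $x^*_\sigma = m_\sigma$ whenever $|m_\sigma|$ is already below the bound and adjust only the large $m_\sigma$ within the affine lattice of solutions to $\mathcal{E}$ so that each $x^*_\sigma$ has the same sign as $m_\sigma$ and $|x^*_\sigma| \le |m_\sigma|$; the bijective correspondence of subsegments is then tautological from the construction. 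The main obstacle I anticipate is the careful bookkeeping at the boundaries where a $\lambda$-periodic subsegment meets a connector, particularly when a cancellation front in $[t_0, \phi(s)\cdot t_0]$ passes through such a boundary or through an absorbed short periodic subsegment; showing that the induced constraints are genuinely linear in the $x_\sigma$ rather than merely piecewise linear is the delicate step, and it is precisely here that the $\kappa$-acylindricity hypothesis does its combinatorial work.
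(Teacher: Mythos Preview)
Your high-level strategy---decompose each $[t_0,\phi(y)\cdot t_0]$ into long periodic pieces and connectors, attach integer variables to the long pieces, extract a linear system from the relators, and replace the given solution by a small one---is exactly the strategy of the paper. The paper differs from your sketch mainly in the technical devices it introduces to make the ``delicate step'' you flagged go through cleanly. First, the paper \emph{triangulates} the presentation so that every relator is a tripod in $\T$; this reduces the cancellation analysis for an arbitrary relator loop to the very controlled combinatorics of three geodesics meeting at a single branchpoint. Second, rather than absorb short periodic pieces into connectors, the paper trims each long maximal periodic segment by a buffer of $\lambda$-periodicity $\kappa+4$ at each end (the ``stable periodic core''); this forces overlapping cores from different sides of a tripod to lie on the \emph{same} translate of $\lambda$ and to be nested with a common endpoint, which is what makes the resulting equations genuinely linear with constants bounded by $2(\kappa+4)$. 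Third, instead of your algebraic substitution $\phi(y)\mapsto \phi(y)\cdot g p^{x_\sigma-m_\sigma}g^{-1}$, the paper works through Ol'shanskii diagrams: it identifies geometric ``$p$-bands'' in each triangular Van-Kampen disc, uses acylindricity to show the sides of long bands carry the trivial label, and then literally cuts or splices copies of a $p\ast 1\ast p^{-1}\ast 1$ square; the surgered diagram is automatically a valid Ol'shanskii diagram, so the new map is a homomorphism without further verification.

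Where your outline would need real work is precisely the passage from ``the loop $\phi^*_x(s)$ must be trivial'' to ``$\sum \pm x_{\sigma_i}=c$''. For a relator of length greater than three, the folding of the loop in $\T$ can make one long periodic piece interact with several others and with connectors in a pattern that is not obviously a single linear equation; the paper sidesteps this entirely via triangulation plus the tripod classification (no dual, one dual, two duals), which yields at most four equations per triangle, each with at most four terms. Your proposal is correct in spirit and could likely be completed, but the paper's triangulation and stable-core buffer are the concrete mechanisms that dissolve the piecewise-linear worry you anticipated.
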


By considering the free group as a multiple independent HNN extension
of the trivial group we recover Bulitko's Lemma. This next Corollary
may be interesting for the study of the universal theory of graphs of
groups in terms of their vertex groups. It roughly states that if a
system of equations and inequations has a solution, then it has a
solution of bounded periodicity.

\begin{cor}\label{cor:preservation}
  Let everything be as in statement of Theorem \ref{thm:main} and let
  $F \subset G$ be a finite set of words in $Y^\pmo$, then there is
  another computable function $\func'$ which takes as input
  $(F,|Y|,|S|,\kappa)$ and outputs a natural number which satisfies
  the same properties as $\func$ as stated in Theorem \ref{thm:main}
  but with the added property that if $f \in F$ was not mapped to
  1 via $\phi$ then it's not mapped to 1 via $\phi^*$.
\end{cor}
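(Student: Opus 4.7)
The plan is to deduce the corollary directly from Theorem~\ref{thm:main} by applying it to an enlarged presentation of $G$ in which each element of $F$ appears as a dedicated generator. I would set $Z = \{z_f : f \in F\}$ and $R = \{z_f f^{-1} : f \in F\}$, and form $G' = \bk{Y \cup Z \mid S \cup R}$. A Tietze transformation identifies $G'$ with $G$, so $\phi$ lifts uniquely to a still-nontrivial homomorphism $\phi' : G' \to \H$ with $\phi'(y) = \phi(y)$ and $\phi'(z_f) = \phi(f)$. Applying Theorem~\ref{thm:main} to $\phi'$ with the same hyperbolic element $p$ produces $\phi'^* : G' \to \H$; composing with the isomorphism $G \cong G'$ gives the sought homomorphism $\phi^* : G \to \H$, and I would define $\func'(F,|Y|,|S|,\kappa) := \func(|Y|+|F|,\, |S|+|F|,\, \kappa)$.

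For the generators $y \in Y$, the length and $\lambda$-periodicity bounds at $y$ are inherited directly from the theorem applied to $\phi'$, so the three bullets of Theorem~\ref{thm:main} are satisfied by $\phi^*$ as a homomorphism on $G$ without further work.

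The substantive point is the preservation of non-triviality on $F$. Fix $f \in F$ with $\phi(f) \neq 1$; since the relation $z_f = f$ in $G'$ gives $\phi^*(f) = \phi'^*(z_f)$, I would split on whether the geodesic $[t_0, \phi(f) \cdot t_0]$ contains a $\lambda$-periodic subsegment. If it does not --- in particular whenever $\phi(f)$ fixes $t_0$, so that the geodesic is a point --- the first bullet of Theorem~\ref{thm:main} forces $\phi'^*(z_f) = \phi'(z_f) = \phi(f) \neq 1$. If it does, the bijective correspondence in the second bullet ensures that $[t_0, \phi'^*(z_f) \cdot t_0]$ also contains a $\lambda$-periodic subsegment, which is by definition a non-degenerate arc; hence $\phi'^*(z_f) \cdot t_0 \neq t_0$ and $\phi'^*(z_f) \neq 1$.

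There is no serious obstacle. The corollary amounts to the observation that, generator by generator, Theorem~\ref{thm:main} either preserves the homomorphism exactly or preserves at least one geometric witness --- a $\lambda$-periodic subsegment --- to non-triviality; adjoining one dedicated generator per element of $F$ simply brings each $f$ into the scope of this dichotomy. The only point to watch is that $\phi'$ must be nontrivial in order to invoke the theorem, which is automatic since $\phi$ already is.
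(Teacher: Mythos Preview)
Your proposal is correct and follows exactly the approach the paper takes: the paper's entire proof is the single sentence ``Change the presentation $\langle Y \mid S\rangle$ of $G$ so that elements of $F$ lie in the generating set and apply Theorem~\ref{thm:main}.'' You have simply fleshed out this sentence, making the Tietze extension explicit and supplying the non-triviality verification that the paper leaves to the reader.

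One small remark: your claim that a $\lambda$-periodic subsegment is ``by definition a non-degenerate arc'' leans on reading the paper's definition of $\lambda$-periodic subsegment as excluding single points; this is the intended reading, but if one were worried about the borderline case where $\phi(f)$ fixes $t_0$ and $t_0$ happens to sit on a translate of $\lambda$, one could instead appeal to the actual construction in the proof of Theorem~\ref{thm:main} (only the exponents $m_i$ in the $p$-stable reduced form are altered, the factors $w_i$ are untouched), which makes preservation of non-triviality transparent without any case split.
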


\begin{proof}
  Change the presentation $\bk{Y \mid S}$ of $G$ so that
  elements of $F$ lie in the generating set and apply Theorem \ref{thm:main}.
\end{proof}

\begin{conv}
  For the rest of the paper we will assume that our splittings only
  have one edge. This causes no loss of generality since we can always
  collapse our splitting so that the underlying graph only has one
  edge while keeping the element $p \in \H$ in Theorem \ref{thm:main}
  hyperbolic. This new splitting will be $\kappa'$-acylindrical where
  $\kappa' \leq \kappa$.
\end{conv}
\section{Presentations representing splittings and $E$-cells.}
\label{sec:e-cells}

Let $\H$ be a f.g. group which splits either as an HNN extension or as a
free product with amalgamation, we will associate to $\H$ a
presentation representing the splitting.
\begin{itemize}
\item If $\H = \bk{A,t \mid t^\mo c_i t = d_i; i \in I}$, i.e. $\H$ is an HNN
  extension of $A$ where $C=\bk{c_i}_{i \in I}$ and $D=\bk{d_i}_{i \in
    I}$ represent generating sets of the associated subgroups, and the
  map $c_i \mapsto d_i$ induces an isomorphism from $C$ to $D$, then we
  can assume that $\{c_i,d_i\}_{i \in I} \subset X$ and
  write \begin{equation}\label{eqn:hnn} \H = \langle X,t \mid R, E
    \rangle\end{equation} where $A = \bk{X \mid R}$ and $E = \{t^\mo
  c_i t = d_i \mid i \in I\}$.
\item If $\H = A*_c B$, we let $\bk{a_i}_{i\in I}$ and $\bk{b_i}_{i \in I}$ represent
  generating sets of $C$ in $A$ and $B$ resp. where the identification
  is given by $a_i = b_i; i \in I$ We assume that $\{a_i\}_{i \in I}
\subset X$ and $\{b_i\}_{i \in I} \subset U$ and we
write \begin{equation}\label{eqn:amalgam}\H = \bra X,U \mid R,T,E
  \kett\end{equation} where $A = \bk{X \mid R}$, $B =\bk{U \mid T},$
and $E = \{a_i = b_i \mid  i \in I\}$.
\end{itemize}

We now construct 2-complexes corresponding to the presentations
(\ref{eqn:hnn}),(\ref{eqn:amalgam}). For a set $S$ we will let
$\bouquet{S}$ be the bouquet of circles where the circles are oriented
and labeled by elements in $S$. We refer the reader to
\cite{Scott-Wall} for a discussion on the connection between graphs of
spaces and graphs of groups, although we note that our construction
differs from theirs.
\begin{itemize}
\item In case we have an HNN extension, i.e.  (\ref{eqn:hnn}), we
  start with the 2-complex $\twocomplx{A}$ corresponding to the
  presentation $\bk{X \mid R}$ and remember that we are assuming that
  the generating sets $\{c_i\},\{d_i\}$ of the associated subgroups
  are included in $X$, we therefore assume that $\bouquet{\{c_i\}}
  \subset \bouquet{X} \supset \bouquet{\{d_i\}}$ in the obvious
  way. We attach the 2-cells corresponding to elements of $R$ to
  $\bouquet{X}$ in the standard way.

  We now attach another oriented circle labelled $e$ to $\bouquet{X}$ to get
  $\bouquet{X,e}$ and for each relation $t^\mo c_i t = d_i$ in $E$
  we attach a square whose boundary is attached to $\bouquet{X,e}$
  along the path $e^\mo * c_i * e * d_i^\mo$ (we are obviously abusing
  notation.) \define{We call these squares $E$-cells}.

  We call the resulting complex $\twocomplx{H}$. It is easy to see that
  $\pi_1(\twocomplx{H}) = \H$. We also note that $\twocomplx{H}$ has a
  natural graph of spaces structure: the vertex space consists of
  $\Gamma_X$ and the 2-cells corresponding to $R$ and the edge space
  consists of the union of the $E$-cells.

\item In case were $\H$ is a free product with amalgamation, i.e.
(\ref{eqn:amalgam}), we start with disjoint cell complexes $
\twocomplx{A}, \twocomplx{B}$ corresponding to the presentations $A = \bk{X 
\mid 
R}$, $B =\bk{U \mid T}$ resp. These cell complexes have a single 
vertex. We connect these vertices by an edge labeled $e$ and for each 
relation $a_i=b_i$ in $E$ we attach squares whose boundary is attached to $
\bouquet{X}\cup \bouquet{Y} \cup e$ along the path $a_i e b_i^\mo e^\mo$. 
\define{Again we call these squares $E$-cells.}

The resulting cell complex, which we call $\twocomplx{H}$, again has a graph 
of spaces structure: the two vertex spaces are the the subcomplexes $
\twocomplx{A}, \twocomplx{B}$ and the edge space is the union of the the $E
$-cells. Again we see that $\pi_1(\twocomplx{H}) = \H.$
 \end{itemize}
 
 We call these 2-complexes the \emph{special 2-complexes corresponding
   to the presentations (\ref{eqn:hnn}) or (\ref{eqn:amalgam}).}

\begin{conv}\label{conv:complexes}
  For any f.p. group $G = \bk{Y \mid S}$ we shall use the script
  letter $\twocomplx{G}$ to denote the 2-complex associated to the
  presentation 2-complex, in particular this 2-complex has a unique
  0-cell. We shall also assume for the rest of the paper that $\H$ is
  $\pi_1(\twocomplx {H},x_0)$ where $\twocomplx{H}$ is a
  \emph{special} 2-complex as given above.  In both cases we have a
  subcomplex $\twocomplx{A} \subset \twocomplx{H}$ and the basepoint
  $x_0$ of $\twocomplx {H}$ will be the unique 0-cell in
  $\twocomplx{A}$. We will identify the subgroup $A \leq \H$ that
  appears in the presentations (\ref{eqn:hnn}) or (\ref{eqn:amalgam})
  with $\pi_1(\twocomplx {A},x_0) \leq \pi_1(\twocomplx{H},x_0)$
moreover we shall assume that the basepoint $v_0$ of $\T$ is the
vertex fixed by $A$.
\end{conv}

\section{Ol'shanskii diagrams}\label{sec:o-diags}
In \cite{Olshanskii-1989} Ol'shanskii presented a remarkably
simple yet extremely useful construction: that a homomorphism
diagram. Let $\twocomplx{H}$ be a 2-complex, besides the 2-cells in
$\twocomplx{H}^{(2)}$ for each edge $x \in \twocomplx{H}^{(1)}$ we
also consider so-called \define{singular cells} as depicted in Figure
\ref{fig:null-cells}. Singular cells were called \emph{0-cells} in
\cite{Olshanskii-1989}. The technical advantage of singular cells is
that they enable us to have non-singular Van-Kampen diagrams.

\begin{figure}
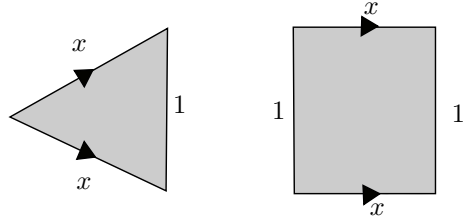

\centering
\psfrag{x}{$x$}
\psfrag{1}{1}
\insertimage{null-cells}{0.5}
\caption{Singular cells}
\label{fig:null-cells}
\end{figure}

\begin{defn}
  Let $\twocomplx{G}$ and $\twocomplx{H}$ be 2-complexes. An
  \define{Ol'shanskii diagram of $\twocomplx{G}$ over $\twocomplx{H}$}
  is a filling of the 2-cells of $\twocomplx{G}$ by Van-Kampen
  diagrams over $\twocomplx{H}$ such that the induced combinatorial
  mappings from $ \twocomplx{G}^{(1)}$ to $\twocomplx{H}^{(1)}$ are
  well defined.
\end{defn}

\begin{lem}\label{lem:o-diags}
  Let $G$ and $H$ be groups which are the fundamental groups of
  2-complexes $\twocomplx{G} $ and $\twocomplx{H}$ resp. Then any
  homomorphism $f:G\rightarrow H$ induces an Ol'shanskii diagram of $
  \twocomplx{G}$ over $\twocomplx{H}$, and conversely any such diagram
  gives a homomorphism from $G$ to $H$.
\end{lem}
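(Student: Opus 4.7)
The plan is to prove the two directions of the correspondence separately, using the standard dictionary between homomorphisms and Van Kampen diagrams, adapted to handle the presence of singular cells.

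For the forward direction, suppose $f: G \to H$ is a homomorphism. For each generator $y \in Y$, which corresponds to an oriented edge in $\twocomplx{G}^{(1)}$, I would choose a word $w_y$ in the generators of $H$ representing $f(y) \in \pi_1(\twocomplx{H},x_0)$; reading $w_y$ as an edge path in $\twocomplx{H}^{(1)}$ (and allowing a singular cell whenever $f(y)=1$ or a subword of $w_y$ cancels) yields a combinatorial map $\twocomplx{G}^{(1)} \to \twocomplx{H}$ valued in $\twocomplx{H}^{(1)}$ together with the singular cells. The point of singular cells (Figure \ref{fig:null-cells}) is precisely to ensure this map is defined on every edge of $\twocomplx{G}^{(1)}$, including those mapped to trivial elements. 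Next, for each 2-cell of $\twocomplx{G}$ with boundary word $s \in S$, the induced image word $f(s)$ is trivial in $H$, so it bounds a Van Kampen diagram $D_s$ over $\twocomplx{H}$; fill the 2-cell of $\twocomplx{G}$ by $D_s$. The collection of these fillings is an Ol'shanskii diagram, and the combinatorial map on 1-skeletons is well defined by construction because it was fixed before we picked the fillings.

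For the reverse direction, suppose we are given an Ol'shanskii diagram of $\twocomplx{G}$ over $\twocomplx{H}$. By definition the induced combinatorial map $\twocomplx{G}^{(1)} \to \twocomplx{H}^{(1)}$ is well defined, so each oriented edge $y \in Y$ is sent to a definite edge path $w_y$ in $\twocomplx{H}^{(1)}$. This determines a homomorphism $\tilde f: F(Y) \to F(X)$ of free groups on the generators (where $X$ denotes the 1-cells of $\twocomplx{H}$). For each relator $s \in S$, the corresponding 2-cell of $\twocomplx{G}$ is filled by a Van Kampen diagram over $\twocomplx{H}$ whose boundary label is $\tilde f(s)$; the existence of this Van Kampen diagram certifies that $\tilde f(s) = 1$ in $H = \pi_1(\twocomplx{H},x_0)$. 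Consequently $\tilde f$ descends to a well-defined homomorphism $f: G \to H$.

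Finally I would verify that the two constructions are mutually inverse up to the homotopical ambiguity inherent in Van Kampen diagrams: starting from $f$ and producing a diagram then reading off the induced homomorphism recovers $f$, because the combinatorial map on 1-skeletons was defined to realize $f$ on generators; starting from a diagram and extracting $f$ then building a diagram recovers the same data on 1-skeletons, with possibly different Van Kampen fillings, but of course the diagrams are all considered up to this choice.

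The only delicate point is that the combinatorial map $\twocomplx{G}^{(1)} \to \twocomplx{H}^{(1)}$ must be \emph{globally} well defined, not merely defined one 2-cell at a time; this is precisely what forbids using naive Van Kampen fillings and what singular cells are designed to accommodate. Once one accepts the definition of Ol'shanskii diagram as incorporating singular cells, the rest of the proof is a direct unpacking of the definitions of Van Kampen diagram and of the presentation of a fundamental group via a 2-complex.
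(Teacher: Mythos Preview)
Your proposal is correct and follows essentially the same approach as the paper: define a combinatorial map on the $1$-skeleton by choosing edge-path representatives for the images of generators, then use the fact that boundary words of $2$-cells map to nullhomotopic loops to fill each $2$-cell with a (non-singular) Van Kampen diagram over $\twocomplx{H}$; conversely, read off the homomorphism from the $1$-skeleton map and use the fillings to verify the relations. Your write-up is more detailed than the paper's (which dispatches the converse with ``clear from the definition''), and your closing paragraph about mutual inverses is extra since the lemma only asserts the two directions, but none of this is incorrect.
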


\begin{proof}
Let $f:G\rightarrow H$ be a homomorphism where $G$ and $H$ are the
fundamental groups of 2-complexes $\twocomplx{G}$ and
$\twocomplx{H}$ resp. From $f$ we can get a combinatorial map from
$\twocomplx{G}^{(1)}$ to $\twocomplx{H}^{(1)}$. By definition of a
homomorphism, the image of the path corresponding to the attaching map
of the boundary of each 2-cell (i.e. the relations) must be sent to
nullhomotopic loops, it follows that by using singular-cells we can fill
the 2-cells of $\twocomplx{G}$ by non-singular Van-Kampen diagrams
over $\twocomplx{H}$.

The converse is also clear from the definition.
\end{proof}

\begin{obs}
  $\odiag{f}$ in fact gives us a combinatorial map from
  $\twocomplx{G}$ to $\twocomplx{H}$.
\end{obs}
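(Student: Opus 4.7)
The plan is to unpack the definition of an Ol'shanskii diagram and observe that the combinatorial map is already built into the data. First I would recall that any Van-Kampen diagram $\Delta$ over $\twocomplx{H}$ is, by construction, a combinatorial 2-complex whose 1-cells carry labels from $\twocomplx{H}^{(1)}$ and whose (possibly singular) 2-cells are combinatorially identified with 2-cells of $\twocomplx{H}$; consequently each such $\Delta$ comes equipped with a canonical combinatorial map $\Delta \to \twocomplx{H}$.

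Next I would invoke the defining property of $\odiag{f}$: each 2-cell $\sigma$ of $\twocomplx{G}$ is filled by a Van-Kampen diagram $\Delta_\sigma$ over $\twocomplx{H}$, and the induced labeling of $\twocomplx{G}^{(1)}$ is by hypothesis well defined, so in particular the boundary labels of all the $\Delta_\sigma$ meeting a given 1-cell of $\twocomplx{G}$ agree. Refining the cell structure on each 2-cell of $\twocomplx{G}$ by the corresponding $\Delta_\sigma$, the canonical maps $\Delta_\sigma \to \twocomplx{H}$ together with the map $\twocomplx{G}^{(1)} \to \twocomplx{H}^{(1)}$ then assemble into a single well-defined combinatorial map from the refined $\twocomplx{G}$ to $\twocomplx{H}$.

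I do not expect a genuine obstacle here: the entire content of the statement is the coherence of these local maps along the 1-skeleton, and that coherence is precisely what the definition of an Ol'shanskii diagram imposes. The one point that deserves explicit mention is the role of the singular cells of Figure \ref{fig:null-cells}: they allow the Van-Kampen fillings $\Delta_\sigma$ to be non-singular even when a relator of $\twocomplx{G}$ is sent by $f$ to a freely trivial word, and they thereby ensure that the resulting combinatorial map is defined on every closed 2-cell of $\twocomplx{G}$ rather than only on a proper open subset.
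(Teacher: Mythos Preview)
Your proposal is correct. The paper states this as a bare observation with no proof at all, so there is nothing to compare against; your unpacking of the definition---that the Van-Kampen fillings $\Delta_\sigma$ each carry a combinatorial map to $\twocomplx{H}$, and that the well-definedness condition on $\twocomplx{G}^{(1)}$ is exactly what is needed to glue these into a global map---is precisely the content the author is taking for granted. Your remark that the domain is really the refined complex (what the paper calls $\refine{G}{f}$ in the immediately following Definition~\ref{defn:refinement}) is also on point and anticipates the paper's next step.
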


\begin{defn}\label{defn:refinement}
  We denote by $\odiag{f}$ an Ol'shanskii diagram induced from a
  homomorphism $f:\pi_1(\twocomplx{G},y_0) \rightarrow
  \pi_1(\twocomplx{H},x_0)$. $\odiag{f}$ gives a \define{refinement}
  of the CW-structure of $\twocomplx{G}$ in the sense that 1-cells get
  subdivided into edge paths and 2-cells get subdivided into
  non-singular Van-Kampen diagram. We denote this refined CW structure
  on $\twocomplx{G}$ as $\refine{G}{f}$.
\end{defn}

We note that there is a well defined map from edge paths in
$\refine{G}{f}$ to edge paths in $\twocomplx{H}$. In the case where
$\twocomplx{H}$ has a single vertex a combinatorial arc in
$\refine{G}{f}$ will always map onto a loop in $\twocomplx{H}$. This
will not always happen in our applications.

\section{Resolutions}\label{sec:e-tracks}
Let $G = \bk{Y \mid R}$ be some groups with associated presentation
2-complex $\twocomplx{G}$. Let $f:G \rightarrow \H$ be a
homomorphism. Then there is an induced Ol'shanskii diagram $\odiag{f}$
of $\twocomplx{G}$ over $\twocomplx{H}$.

The 2-complex $\twocomplx{H}$ we chose is a graph of spaces and it
follows that the universal cover $\wt{\twocomplx{H}}$ also decomposes
as graph of spaces, where the vertex spaces are lifts of \[
\tr{Closure}\big( \twocomplx{H} \setminus (\bigcup_{i\in I}
E_i)\big)\] where $E_i; i \in I$ are the $E$-cells. If we define an
equivalence relation $\sim$ on $\wt{\twocomplx{H}}$ which identifies
the vertex spaces to points and the edge spaces (these are connected
components of the lifts of the union of $E$-cells) to arcs in the
standard way we get an $\H$ equivariant isomorphism of
trees \[\xymatrix{\wt{\twocomplx{H}}/\sim \ar[r]^{\approx} & \T}\] If
$\wt{x_0}$ was a basepoint of $\wt{\twocomplx{H}}$ then it lied in a
vertex space $\wt{\twocomplx{A}}_0$ and we set the basepoint $v_0 \in
\T$ to be the vertex corresponding to this vertex space. This is
consistent with Convention \ref{conv:complexes}. We refer the reader
to \cite{Scott-Wall} for further details.

  So now $\twocomplx{G}$ is covered by 2-cells from $\twocomplx{H}$
  and this induces continuous maps:\[ \wt{(\twocomplx{G}},\wt{y_0})
  \rightarrow (\wt{\twocomplx{H}},\wt{x_0}) \rightarrow (\T,v_0).\] We
  we denote the composition \[\res{f}: \wt{(\twocomplx{G}},\wt{y_0})
  \rightarrow(\T,v_0)\] and call it a \define{resolution of $\T$ through
  $f$}. Let $g \in G$ then for example we have \[ \res{f}(g \cdot
  \wt{y_0}) = f(g)\cdot v_0.\]

  Recall that for a homomorphism $f:G \rightarrow \H$ then an
  Ol'shanskii diagram first depends on a choice of edge paths in
  $\twocomplx{H}$ corresponding to the image of each generator, and
  then a corresponding Van-Kampen diagram corresponding
  to the edge path read around the boundary of each 2-cell. 
  
\begin{defn}
  The union of the $E$-cells in inside a 2-cell $D$ of $\twocomplx{G}$ form a
  collection of \define{strips}, i.e. Cartesian products of 1-manifolds
  and an interval. \define{We shall call these $E$-strips}.
\end{defn}

\begin{defn} Let $f:G\rightarrow \H$ be a homomorphism, then an
  Ol'shanskii diagram $D(f)$ is $E$-tight if \begin{enumerate}
  \item for each edge, labeled $y_i$, in $\twocomplx{G}$ the
    corresponding edge path for $f(y_i)$ has the minimal possible
    number of $e$-labeled edges; and
  \item the Van-Kampen diagrams for each 2-cell have a minimal
    possible number of $E$-strips.
\end{enumerate}
\end{defn}

\begin{lem}\label{lem:e-tight}
  Let everything be as above. Let $D$ be a 3-sided 2-cell of $\twocomplx{G}$,
  let $\odiag{f}$ be $E$-tight and let $D'$ be the closure of the
  complement\[ \tr{closure} \Big(D \setminus \big( \bigcup_{e\tr{-cells} \subset
    \odiag{f}} E_j \big)\Big).\] If $\wt{D}$ is a lift of $D$ in
  $\twocomplx{G}$, the universal cover, and $\wt{D'} \subset \wt{D}$ the subset
  corresponding to $D'$, then each connected component of $\wt{D'}$ maps
  onto a different vertex of $\T$ via the resolution $\res{f}$.
\end{lem}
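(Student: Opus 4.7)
The plan is to argue by contradiction using the dual tree of the $E$-strip decomposition of $D$, and to convert any collision of pieces in $\T$ into a cancellation of $E$-strips that contradicts $E$-tightness.

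First I would analyze the combinatorics inside the disk $D$. The $E$-strips of the Van Kampen filling supplied by $\odiag{f}$ are pairwise disjoint, properly embedded rectangles, so they cut $D$ into closed sub-disks, namely the connected components of $D'$. The dual graph whose vertices are the pieces of $D'$ and whose edges are the $E$-strips is therefore a tree, since $D$ is simply connected. This picture lifts verbatim to any lift $\wt D \subset \wt{\twocomplx{G}}$: each connected component of $\wt{D'}$ maps, via $\odiag{f}$, into a single lift of a vertex space of $\wt{\twocomplx{H}}$ and hence under $\res{f}$ to a single vertex of $\T$, while each $E$-strip of $\wt D$ maps into a single lift of the edge space and hence to a single edge of $\T$ joining the $\T$-images of the two pieces it borders. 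In particular, adjacent pieces of $\wt{D'}$ go to distinct vertices of $\T$, because edges of $\T$ have distinct endpoints.

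Next I would suppose for contradiction that two components $\wt P$ and $\wt Q$ of $\wt{D'}$ are sent to the same vertex $v$ of $\T$. The unique path in the dual tree from $\wt P$ to $\wt Q$ projects to a closed edge path in $\T$ based at $v$, and since $\T$ is a tree such a path must backtrack. So there is a piece $\wt P_0$ on the path and two consecutive $E$-strips $\sigma, \sigma'$ on its boundary whose $\res{f}$-images coincide as edges of $\T$; equivalently, $\sigma$ and $\sigma'$ lie in a common lift of the edge space in $\wt{\twocomplx{H}}$.

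The main obstacle is converting this $\T$-coincidence into a concrete contradiction with $E$-tightness. The plan is to use the fact that the sides of $\sigma$ and $\sigma'$ that lie on $\wt P_0$ both sit on the boundary of the same edge-space lift in $\wt{\twocomplx{H}}$; consequently the portion of $\bdy \wt P_0$ between them maps, after projection to $\twocomplx{H}$, to a path in the vertex space that is homotopic rel endpoints into the edge space, i.e.\ it reads a word $w$ representing an element of the associated subgroup. I would then perform the standard HNN/amalgam reduction: slide $\sigma'$ across $\wt P_0$ toward $\sigma$, absorbing $w$ through iterated applications of the defining $E$-cell relation $e^{\mo} c e = d$ (or $a = b$ in the amalgam case), which collapses $\sigma$ and $\sigma'$ into either a single $E$-strip or nothing at all. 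The outcome is a new Van Kampen filling of $D$ with strictly fewer $E$-strips than the original, contradicting the second clause of $E$-tightness --- unless the offending strips meet $\bdy D$ inside the edge path $\odiag{f}$ assigns to some generator $y \in Y$, in which case the same move shortens the $e$-count of that path, contradicting the first clause instead.
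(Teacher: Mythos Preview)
Your approach is correct and genuinely different from the paper's. The paper argues directly on $\partial\wt D$: it first rules out annular strips and strips with both ends on the same side (an innermost argument contradicting clause~1), so that every component of $\wt{D'}$ meets the boundary; then it picks boundary points $p,q$ in the two offending components, finds a backtracking subword $e^{\pm1}h\,e^{\mp1}$ along $\partial\wt D$, and does a same-side/different-side case split (the latter handled by a picture). Your dual-tree formulation is cleaner: the simplicial map from the dual tree of the strip decomposition to $\T$ must fold somewhere if two pieces collide, and a fold is exactly the obstruction to strip-minimality. This packages the paper's two preliminary reductions (no annuli, no same-side strips) and its final case analysis into a single statement.

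That said, your last paragraph is where the argument gets loose. The phrase ``slide $\sigma'$ across $\wt P_0$ toward $\sigma$'' is not a well-defined diagram move as written: the arc of $\partial\wt P_0$ between $\sigma$ and $\sigma'$ may run through other strips and through corners of the triangle, so there is no obvious local homotopy that cancels just this pair. More seriously, your clause~1/clause~2 dichotomy is miscalibrated. Non-annular strips \emph{always} meet $\partial D$, so your ``unless'' branch is almost always triggered; but then you are claiming to shorten the $e$-count of a single generator path, and this fails precisely when the two relevant $e$-edges sit on different sides of the triangle (the corner-spanning case that the paper handles via its Figure~3 trick). The clean fix, in your framework, is to drop the clause~1 branch entirely and argue clause~2 directly: a fold in the dual tree $T_D\to\T$ means $T_D$ has strictly more edges than the image tripod, and one can realise the folded tree as the dual tree of a new Van~Kampen filling of the \emph{same} boundary word with one fewer strip --- build the new diagram region-by-region from the folded tree, using that the boundary label around each collapsed region is trivial in the relevant vertex group. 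That is the content your ``slide'' is gesturing at; once stated this way the contradiction with clause~2 is immediate and no side-by-side case analysis is needed.
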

\begin{proof}
  We first note that there can be no annular $E$-strips lying in
  $\wt{D}$, since otherwise we can cut it out and fill it in with a
  Van-Kampen diagram over without any $E$-cells, contradicting $E$-tightness. Suppose we can read $y_iy_jy_k$ around the boundary
  of $D$.
  
  {\bf Claim:} No strip can intersect the same side of $\wt{D}$ in two connected components. Suppose it were possible, let $s$ be an innermost $E$-strip with this property and suppose it intersects the edge labeled $y_k$ twice. Then by Figure \ref{fig:same-edge-strip} we see that the subpath \[e^\pmo h e^\mpo\] in the path in $\twocomplx{H}$ we chose to represent $f(y_k)$ can be replaced by some path $g$ not containing any $e$-edges. This contradicts $E$-tightness and proves the claim.
  
\begin{figure}
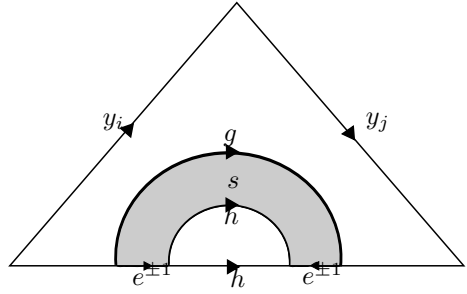

\centering
\psfrag{yi}{$y_i$}
\psfrag{yj}{$y_j$}
\psfrag{ai}{$h$}
\psfrag{bi}{$g$}
\psfrag{ep}{$e^\pmo$}
\psfrag{em}{$e^\pmo$}
\psfrag{s}{$s$}
\insertimage{same-edge-strip}{0.50}
\caption{An $E$-strip $s$ intersecting a side of $D$ twice.}
\label{fig:same-edge-strip}
\end{figure}

  It therefore follows that each component of $\wt{D'}$ intersects the
  boundary $\partial\wt{D}$. Now suppose that two points $p,q$ on
  $\partial\wt{D}$ were mapped to the same vertex but that they lied
  different connected components of $\wt{D'}$. Then w.l.o.g. we may assume that the edge path $\gamma$ between $p,q$ in $\wt{\refine{G}{f}} \cap \partial\wt{D}$ that maps via $\res{f}$ to a path in $\T$. This path starts in a vertex $u \in \T$ goes through and edge $a \subset \T$ and then backtracks to $u$. It therefore follows that $\gamma$ can be arranged to have label $e^\pmo h e^\mpo$ where $h$ is a path in $\twocomplx{A}\cup \twocomplx{B}$ and such that the loop $e^\pmo h e^\mpo$ in $\twocomplx{H}$ is homotopic (rel. endpoint) to some loop $g \in \twocomplx{A}\cup \twocomplx{B}$. Thus if $p,q$ lied in the same side (say the one labeled $y_k$) of $\wt{D}$ then we could find another path representative for $f(y_i)$ with fewer $e$-labeled edges contradicting $E$-tightness.
  
  It therefore follows that $p$ and $q$ must lie on different sides of $\wt{D}$, but by the Claim proved earlier the $E$-strips touching $p,q$ must be as in Figure \ref{fig:different-edge-strip}, in which case we have $p'$ and $q'$ lying in the same side of $\wt{D}$, but mapping to the same vertex of $\T$ and by the previous paragraph this is impossible.
  
  \begin{figure}
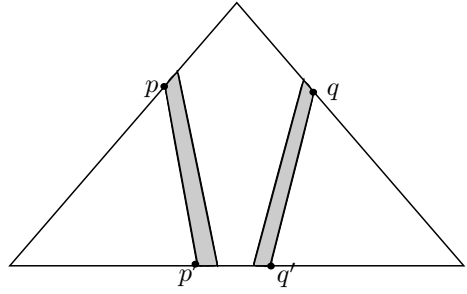

  \centering
  	\psfrag{p}{$p$}
	\psfrag{q}{$q$}
	\psfrag{p'}{$p'$}
	\psfrag{q'}{$q'$}
	\insertimage{different-edge-strip}{0.50}
	\caption{An $E$-strip cannot intersect the same side of $\wt{D}$ in two connected components.}
	\label{fig:different-edge-strip} 
  \end{figure}

\end{proof}


\section{Periodic decompositions of paths}\label{sec:path-decompositions}

\begin{defn} Let $\T$ be the Bass-Serre tree as defined and let
  $\lambda \subset \T$ be a bi-infinite line. We say that $\lambda$ is
  a \emph{periodic line} if there is some $p \in H$ such that $p\cdot
  \lambda = \lambda$. If $p$ is not a proper power we say that $p$ is
  a \emph{period of $\lambda$}. \end{defn}

\begin{lem}\label{lem:periodic-intersection} Let $T$ be
  $\kappa$-acylindrical, let $\lambda$ be a periodic line and let
  $\lambda'$ be a distinct $\H$-translate of $\lambda$, and let
  $\sigma = \lambda \cap \lambda'$. Then $\per{l}{\sigma} \leq
  \kappa+3$.
\end{lem}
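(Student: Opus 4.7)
The plan is to turn the periodicity of $\sigma$ into a long pointwise-fixed arc and then invoke $\kappa$-acylindricality. Let $p$ be a period of $\lambda$; since $\lambda' = g\cdot \lambda$ for some $g\in \H$, the conjugate $p' := g p g^{\mo}$ stabilises $\lambda'$ setwise and is a period of $\lambda'$. Both $p$ and $p'$ act on their respective axes as translations of the same length $\ell := |\lambda_0|$. Writing $L = |\sigma|$, the quantity to bound is $\per{l}{\sigma} = \lfloor L/\ell\rfloor$.

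The segment $\sigma$ inherits two parametrisations, one from $\lambda$ and one from $\lambda'$, and I would split into cases according to whether $p$ and $p'$ translate points of $\sigma$ in the same direction or in opposite directions. In the same-orientation case, for any $x\in \sigma$ lying at distance at least $\ell$ from the appropriate endpoint, both $p\cdot x$ and $p'\cdot x$ coincide with the unique point of $\sigma$ obtained by sliding $x$ by $\ell$ along $\sigma$ (uniqueness coming from $\T$ being a tree); hence $p^{\mo}p'$ pointwise fixes a subarc of $\sigma$ of length $L-\ell$. In the opposite-orientation case, the same short calculation, but applied to the composition $pp'$, shows that $pp'$ pointwise fixes a subarc of length $L-\ell$. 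Crucially, neither of these two elements can be trivial: $p^{\mo}p'=1$ or $pp'=1$ would force $p' = p^{\pmo}$, whose axis is still $\lambda$, contradicting $\lambda\neq \lambda'$.

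Applying $\kappa$-acylindricality to whichever nontrivial element was produced gives $L-\ell \leq \kappa$, and since $\ell\geq 1$ in the combinatorial metric on $\T$ we conclude $\per{l}{\sigma} = \lfloor L/\ell\rfloor \leq 1 + \lfloor \kappa/\ell\rfloor \leq \kappa+1 \leq \kappa+3$, with a fair amount of room to spare. I expect the only real difficulty to be the orientation bookkeeping on $\sigma$: the two cases have to be set up carefully (perhaps with a small picture) so that the claimed fixed set is a genuine subarc of length $L-\ell$ rather than a discrete set of points, which is precisely where uniqueness of geodesics in the tree $\T$ is essential.
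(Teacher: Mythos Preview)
Your argument is correct and in fact yields the sharper bound $\per{l}{\sigma}\leq \kappa+1$. The paper proceeds differently: rather than splitting on the relative orientation of $p$ and $p'$ along $\sigma$, it works with the commutator $[p',p]=p'^{-1}p^{-1}p'p$. Since $p$ and $p'$ have the same translation length, this commutator acts as a net translation by $0$ on the interior of $\sigma$ regardless of orientation, so a suitably chosen subsegment $\sigma'\subset\sigma$ of length exceeding $\kappa$ is fixed pointwise; nontriviality of $[p',p]$ follows because elements with distinct axes cannot commute. The trade-off is that the commutator route avoids your case analysis at the cost of needing two extra fundamental domains of slack to ensure the intermediate images $p\sigma',\,p'p\sigma',\,p^{-1}p'p\sigma'$ remain inside $\sigma$, which is exactly where the weaker constant $\kappa+3$ comes from. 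Your approach is more elementary and loses less; the paper's is more uniform but slightly wasteful.
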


\begin{proof}
  W.l.o.g. we can assume that a fundamental domain $\lambda_0$ of $\lambda$ is
  coinitial with $\sigma$.
	
  Suppose towards a contradiction that the $\lambda$-periodicity of
  $\sigma$ is greater than $\kappa+3$. Let $p,p'$ be periods of
  $\lambda,\lambda'$ resp. By assumption we have that the segment $\lambda_0
  \cup p \lambda_0 \cup \cdots \cup p^{\kappa+3} \lambda_0$ lies in
  $\sigma$ and that the segment $\sigma' = \lambda_0 \cup p \lambda_0
  \cup \cdots \cup p^{\kappa+1} \lambda_0$ has length more than
  $\kappa$.  Since $p$ and $p'$ are conjugate (and therefore have same
  translation lengths) and that $ \sigma$ lies in both of their axes
  of translation we have that the segments \[p \sigma', (p' p
  )\sigma', (p^\mo p' p )\sigma', (p'^\mo p^\mo p' p)\sigma'\] all lie
  in $\sigma$ and in particular that $(p'^\mo p^\mo p' p)\sigma' =
  \sigma'$ on the other hand $p'^\mo p^\mo p' p \neq 1$ since $p$ and
  $p'$ have distinct axes (and hence do not commute). This contradicts
  $\kappa$-acylindricity. \end{proof}

\begin{cor}\label{cor:segment-uniqueness} Let $\T$ be a Bass-Serre tree
  and let $\lambda$ be a periodic line. Let $\sigma$ be some segment
  that lies in some $\H$-translate $\lambda'$ of $\lambda$. If
  $\per{l}{\sigma} > \kappa+3$ then $\sigma$ cannot be contained in
  any other $\H$-translate of $\lambda$.
\end{cor}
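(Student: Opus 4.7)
The plan is to argue by contradiction, using Lemma \ref{lem:periodic-intersection} essentially as a black box. Suppose that $\sigma$ lies in two distinct $\H$-translates of $\lambda$, say $\lambda_1 = h_1 \cdot \lambda$ and $\lambda_2 = h_2 \cdot \lambda$ with $\lambda_1 \ne \lambda_2$. Then $\sigma \subseteq \lambda_1 \cap \lambda_2$, and since $\per{l}{\cdot}$ is defined as $\lfloor |\cdot|/|\lambda_0| \rfloor$, it is monotone with respect to inclusion. Thus $\per{l}{\sigma} \leq \per{l}{\lambda_1 \cap \lambda_2}$.

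The next step is to observe that $\lambda_2 = (h_2 h_1^{-1}) \cdot \lambda_1$ is a distinct $\H$-translate of the periodic line $\lambda_1$, so Lemma \ref{lem:periodic-intersection} applies (with $\lambda_1$ playing the role of $\lambda$ and $\lambda_2$ that of $\lambda'$) and yields $\per{l}{\lambda_1 \cap \lambda_2} \leq \kappa+3$. Chaining this with the previous inequality gives $\per{l}{\sigma} \leq \kappa+3$, contradicting the hypothesis $\per{l}{\sigma} > \kappa+3$.

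The only point that needs a brief justification—but is not really an obstacle—is that $\lambda$-periodicity is well-defined when applied to subsegments of an arbitrary $\H$-translate of $\lambda$, and that Lemma \ref{lem:periodic-intersection}, though stated for $\lambda$ and one of its translates, is really a statement about any periodic line and its translates. Both are immediate from the fact that $h \cdot \lambda_0$ is a fundamental domain of $h \cdot \lambda$ of the same length $|\lambda_0|$, so all measurements are translation-invariant. There is no deeper step: the corollary is essentially a monotonicity-and-contrapositive repackaging of the preceding lemma.
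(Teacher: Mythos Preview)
Your proof is correct and is precisely the intended deduction: the paper states this as an immediate corollary of Lemma~\ref{lem:periodic-intersection} without giving a separate argument, and your contrapositive-plus-monotonicity reasoning is exactly how one reads it off. The remark that fundamental domains translate along with the line, so that $\per{l}{\cdot}$ is translation-invariant and the lemma applies to any $\H$-translate of $\lambda$, is the only thing worth noting and you handled it.
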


\begin{defn}For a fundamental domain $\lambda_0 \subset \lambda$ we will say that a periodic segment is
  \define{$(\lambda,\lambda_0)$-normal} if it is a union of translates
  of $\lambda_0$ of the form \[ \bigcup_{n=r}^s h p^n h^\mo
  \lambda_0\] for some $h \in \H$ and integers $r\leq s$.
\end{defn}

We now take $\rho$ and look at all its maximal
$(\lambda,\lambda_0)$-normal periodic subsegments. These subsegments may not
be disjoint, however by Corollary \ref{cor:segment-uniqueness} their
intersection will have $\lambda$-periodicity at most $\kappa+3$. An
$(\lambda,\lambda_0)$-normal periodic segment is \emph{long} if it has
$\lambda$-periodicity more than $2(\kappa+4)$.

\begin{defn} Let $\lambda$ and $\lambda_0$ be as above, let $\rho$ be
  a geodesic in $\T$ and let $\sigma \subset \rho$ be a long maximal
  $(\lambda,\lambda_0)$-normal periodic segment. Let
\[\sigma = \sigma_+ *\ol{\sigma}* \sigma_-\] were $*$ denotes
concatenation and $\sigma^+, \sigma^-$ both have $\lambda$-periodicity
$\kappa+4$. Then we call $\ol{\sigma}$ the
\define{$(\lambda,\lambda_0)$-stable periodic core of
  $\sigma$.} \end{defn}

\begin{defn}\label{defn:periodic-decomposition} Let $\lambda,\lambda_0$ be as
  above and let $\rho$ be a geodesic $\T$. A decomposition of
  $\rho$ as \[\rho = \rho_1 * c_1 *
  \rho_2 *c_2 *\cdots *c_n *\rho_{n+1}\] where each $c_i$ is an
  $(\lambda,\lambda_0)$-stable periodic core and each long $\lambda$-periodic subsegment of
  $\rho$ contains a $c_i$ is called an \define{$(\lambda,\lambda_0)$-periodic
    decomposition}. \end{defn}
    
\begin{prop}\label{prop:periodic-core}
	Let $\rho \subset \T$ be any geodesic, and let $\lambda,\lambda_0$ be fixed. Then any long 
periodic subsegment of $\rho$ contains a $(\lambda,\lambda_0)$-stable periodic core and the 
decomposition given in Definition \ref{defn:periodic-decomposition} is well defined.
\end{prop}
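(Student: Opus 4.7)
The plan is twofold. First, I will show that every long $\lambda$-periodic subsegment of $\rho$ naturally determines a maximal $(\lambda,\lambda_0)$-normal subsegment of $\rho$ of essentially the same length, and consequently contains a $(\lambda,\lambda_0)$-stable periodic core. Second, I will show that the stable cores arising from distinct maximal $(\lambda,\lambda_0)$-normal subsegments of $\rho$ are pairwise disjoint, so that listing them in the order they appear along $\rho$ yields the decomposition promised in Definition \ref{defn:periodic-decomposition}.

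For the first half, let $\tau=\rho\cap g\cdot\lambda$ be a long $\lambda$-periodic subsegment of $\rho$. Since $\per{l}{\tau}>2(\kappa+4)>\kappa+3$, Corollary \ref{cor:segment-uniqueness} guarantees that $g\cdot\lambda$ is the only $\H$-translate of $\lambda$ containing $\tau$. Now trim $\tau$ at each end to the nearest endpoint of a translate of $\lambda_0$ lying in $g\cdot\lambda$; the resulting segment $\sigma\subset\tau$ is $(\lambda,\lambda_0)$-normal, and an elementary integer-arithmetic check shows the trimming costs at most one unit of $\lambda$-periodicity in total, so $\sigma$ remains long. Any strictly larger $(\lambda,\lambda_0)$-normal extension $\sigma'\supsetneq\sigma$ inside $\rho$ would lie in some translate $g'\cdot\lambda$, but by the uniqueness corollary applied to $\sigma$ we must have $g'\cdot\lambda=g\cdot\lambda$, which forces $\sigma'\subset\rho\cap g\cdot\lambda=\tau$ and contradicts the maximality of $\sigma$ inside $\tau$. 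Thus $\sigma$ is a maximal long $(\lambda,\lambda_0)$-normal subsegment of $\rho$, and its stable periodic core $\ol{\sigma}$ is defined and satisfies $\ol{\sigma}\subset\sigma\subset\tau$.

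For the well-definedness of the decomposition, let $\sigma_1,\sigma_2$ be two distinct maximal long $(\lambda,\lambda_0)$-normal subsegments of $\rho$, lying in translates $g_1\cdot\lambda$ and $g_2\cdot\lambda$ respectively. Since $\per{l}{\sigma_1}>\kappa+3$, Corollary \ref{cor:segment-uniqueness} forces $g_1\cdot\lambda\neq g_2\cdot\lambda$ (otherwise both $\sigma_i$ would be the unique maximal $(\lambda,\lambda_0)$-normal subsegment of $\rho\cap g_1\cdot\lambda$, hence equal). Lemma \ref{lem:periodic-intersection} bounds $\per{l}{g_1\cdot\lambda\cap g_2\cdot\lambda}\leq\kappa+3$, so $\sigma_1\cap\sigma_2$ spans at most $\kappa+3$ translates of $\lambda_0$. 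Because $\ol{\sigma_1}$ sits inside $\sigma_1$ after removing prefix and suffix of $\lambda$-periodicity $\kappa+4$ each, the trimmed portion of $\sigma_1$ already engulfs the entire overlap with $\sigma_2$, giving $\ol{\sigma_1}\cap\sigma_2=\emptyset$ and a fortiori $\ol{\sigma_1}\cap\ol{\sigma_2}=\emptyset$. Enumerating these pairwise disjoint cores along $\rho$ in order produces the decomposition $\rho=\rho_1*c_1*\rho_2*c_2*\cdots*c_n*\rho_{n+1}$ of Definition \ref{defn:periodic-decomposition}, and the containment clause there is exactly the content of the first half.

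The principal obstacle is bookkeeping the three scales that enter the argument: the $\kappa+3$ periodicity bound on intersections of distinct $\H$-translates of $\lambda$ coming from Lemma \ref{lem:periodic-intersection}, the $\kappa+4$ trimming margin built into the definition of the stable core, and the single-unit discretization loss incurred when passing from a raw $\lambda$-periodic subsegment to its maximal $(\lambda,\lambda_0)$-normal sub-segment. The threshold $2(\kappa+4)$ in the definition of "long" is calibrated precisely to absorb all three losses at once, leaving a genuine, nonempty stable core inside every long periodic subsegment.
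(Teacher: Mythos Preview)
Your proof is correct and follows the same route as the paper's, which is a two-line sketch: the first assertion ``follows from the definition'' and the second ``follows from Corollary~\ref{cor:segment-uniqueness}.'' You have simply made both of these invocations explicit, carrying out the trimming-to-the-$\lambda_0$-grid argument for the first part and the overlap bound (via Lemma~\ref{lem:periodic-intersection} and Corollary~\ref{cor:segment-uniqueness}) for the disjointness of stable cores in the second.
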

\begin{proof}
The first part follows from the definition and the second part follows from Corollary \ref
{cor:segment-uniqueness}.
\end{proof}

\section{Tripods}\label{sec:tripods}
We fix $\lambda,\lambda_0$ as given in Section
\ref{sec:path-decompositions}. Consider three nonconstant geodesic
paths $\gamma_1, \gamma_2,\gamma_3$ in $\T$ where $\gamma_1 =
[v_0,v_1], \gamma_1 = [v_1,v_2]$, and $\gamma_3 = [v_2,v_0]$. Then the
path $\gamma_1 * \gamma_2 * \gamma_3$ is called a \define{tripod}
which we denote $\tau$. All three paths have pairwise nontrivial
intersection. It is easy to see that\[\bigcap_{i,j} (\gamma_i \cap
\gamma_j) = \{b\}\] where $b $ is a point. We call $b$ the
\define{branchpoint} of $\tau$, we call the closures of the
components of $\tau \setminus \{b\}$ \define{branches}.

All three paths have $(\lambda,\lambda_0)$-periodic
decompositions. Let \[\ol{\sigma_i} \subset \sigma_i \subset \gamma_i;
\ol{\sigma_j} \subset \sigma_j \subset\gamma_j\] where the $\ol
{\sigma_m}$ are $(\lambda,\lambda_0)$-stable periodic cores and
$\sigma_m$ are maximal normal $(\lambda,\lambda_0)$-periodic segments
for $m=i,j$ and where $i,j \in \{1,2,3\}; i \neq j$. We write
$\ol{\sigma_i} \sim \ol{\sigma_j}$ if the corresponding maximal
periodic segments $\sigma_i,\sigma_j$ that contain them lie in the
same translate $\lambda'$ of $\lambda$. 

\begin{lem}\label{lem:match-up}
  Let $\tau$ be tripod as described above. If $\ol{\sigma_i} \subset
  \gamma_i, \ol{\sigma_j} \subset \gamma_j$ are
  $(\lambda,\lambda_0)$-stable periodic cores then \[\ol{\sigma_i}
  \cap \ol {\sigma_j} \neq \emptyset \Rightarrow \ol{\sigma_i} \sim
  \ol{\sigma_j}\]
\end{lem}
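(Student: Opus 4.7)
The plan is to extract, from any point where the two stable cores meet, a subarc sitting simultaneously inside both maximal periodic segments $\sigma_i$ and $\sigma_j$ of $\lambda$-periodicity strictly greater than $\kappa+3$. Once such a subarc is in hand, Corollary \ref{cor:segment-uniqueness} forces the translates $\lambda'_i \supset \sigma_i$ and $\lambda'_j \supset \sigma_j$ of $\lambda$ to coincide, which is exactly the assertion $\ol{\sigma_i} \sim \ol{\sigma_j}$. Uniqueness of these translates is automatic, because $\sigma_i,\sigma_j$ are long and in particular have periodicity $>2(\kappa+4)>\kappa+3$.

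First I would pick some $x \in \ol{\sigma_i} \cap \ol{\sigma_j}$. Since $\ol{\sigma_i}\subset\gamma_i$ and $\ol{\sigma_j}\subset\gamma_j$, this $x$ lies in $\gamma_i\cap\gamma_j$, which in the tripod $\tau$ is one of the three branches meeting at $b$. Relabeling if needed, I may take this shared branch to be $[b,v_0]$, where $v_0$ is the common endpoint of $\gamma_i$ and $\gamma_j$. The central tree-geometric observation is that moving from $x$ toward $v_0$ in $\T$ stays in $[b,v_0]$, whether one travels along $\gamma_i$ or along $\gamma_j$.

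The key step is to exploit the defining property of the stable core. Because $\ol{\sigma_i}$ arises from $\sigma_i$ by trimming $\lambda$-periodicity exactly $\kappa+4$ from each end, the portion $[p_i,x]$ of $\sigma_i$ lying on the $v_0$-side of $x$ has $\lambda$-periodicity at least $\kappa+4$; the analogous statement holds for $[p_j,x]\subset\sigma_j$. Both of these segments live in $[v_0,x]$ and share $x$ as an endpoint, so one is contained in the other. Whichever is shorter lies in $\sigma_i\cap\sigma_j\subset\lambda'_i\cap\lambda'_j$ while retaining $\lambda$-periodicity $\geq \kappa+4 > \kappa+3$, and Corollary \ref{cor:segment-uniqueness} then delivers $\lambda'_i = \lambda'_j$.

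The only point requiring care is the tree-geometric claim that both $v_0$-ward extensions stay inside the common branch $[b,v_0]$. This is immediate from $x\in[b,v_0]$ together with the tripod structure, but it is worth noting so that the boundary case $x=b$ is clearly covered---at $b$ the $v_0$-ward directions in $\gamma_i$ and $\gamma_j$ coincide, and nothing changes. I do not anticipate any deeper obstacle.
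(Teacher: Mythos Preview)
Your argument is correct and is essentially the same as the paper's: both pick a point in $\ol{\sigma_i}\cap\ol{\sigma_j}$, use the $\kappa+4$ padding on each side of the stable cores to find coinitial subsegments of $\sigma_i$ and $\sigma_j$ lying in the common branch $\gamma_i\cap\gamma_j$, and invoke Corollary~\ref{cor:segment-uniqueness}. The only difference is cosmetic---you orient toward the shared endpoint $v_0$ explicitly, whereas the paper speaks of the four half-segments and asserts that two of them can be chosen inside the branch.
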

\begin{proof}
  There are maximal periodic segments $\sigma_i,\sigma_j \subset
  \gamma_i,\gamma_j$ resp.  such that each component of $\sigma_l
  \setminus \ol{\sigma_l}; l = i,j$ has $\lambda$-periodicity exactly
  $\kappa+4$ (by definition of stable periodic core). Let $m \in
  \ol{\sigma_i} \cap \ol{\sigma_j} \neq \emptyset$. Consider the four
  ``half segments'' \[ \sigma_i \setminus \{m\}, \sigma_j \setminus
  \{m\}\] They all have $ \lambda $-periodicity more than $\kappa+4$
  moreover it is easy to see that we can pick two of them so that they
  lie in $\gamma_i \cap \gamma_j$, which is a branch. So we have $m
  \in \gamma_i\cap \gamma_j$ and we have two segments
  $\sigma_i'\subset \sigma_i$ and $\sigma_j'\subset \sigma_j$ that
  start at $m$ and remain in $\gamma_i\cap\gamma_j$, moreover these
  segments can be chosen to be coinitial. Since they have length more
  than $\kappa+4$ by Corollary \ref{cor:segment-uniqueness} they must
  lie in the same periodic line $\lambda' = h\cdot \lambda$. The
  result now follows.
  \end{proof}

\begin{cor}\label{cor:branch-point}
  There is at most one $(\lambda,\lambda_0)$-stable periodic core
  lying in $\gamma_i \cup \gamma_j \cup \gamma_k$ whose interior
  contains the branchpoint $b$.
\end{cor}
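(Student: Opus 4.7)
My plan is to argue by contradiction. Suppose that $\ol{\sigma}$ and $\ol{\sigma'}$ are two distinct $(\lambda,\lambda_0)$-stable periodic cores both of whose interiors contain $b$, and let $\ol{\sigma} \subset \gamma_i$, $\ol{\sigma'} \subset \gamma_j$. I would split the argument according to whether $i \neq j$ or $i = j$, since each case uses a different previously established lemma as its engine.

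For the case $i \neq j$, I would invoke Lemma \ref{lem:match-up} directly: since $b \in \ol{\sigma} \cap \ol{\sigma'}$ is a nonempty intersection, the ambient maximal periodic segments $\sigma \supset \ol{\sigma}$ and $\sigma' \supset \ol{\sigma'}$ must lie in a common translate $\lambda'$ of $\lambda$. The contradiction then comes from tree geometry at $b$: the three arms of the tripod use three distinct directions at $b$, so any two of the $\gamma_m$ share only one direction there. Because $b$ is interior to each core, the segment $\sigma$ exits $b$ along both of $\gamma_i$'s local directions at $b$, and $\sigma'$ exits $b$ along both of $\gamma_j$'s, forcing the line $\lambda'$ to pass through $b$ using at least three distinct directions. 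This is impossible for a bi-infinite geodesic, which uses exactly two directions at any interior point.

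For the case $i = j$, both cores lie in the same $\gamma_i$, and since they are distinct their ambient maximal periodic segments $\sigma, \sigma'$ must lie in distinct translates $\lambda_1 \neq \lambda_2$ of $\lambda$. Here my plan is to exploit the $\kappa+4$-periodicity buffers built into the definition of a stable core: $b$ being interior to each core means that along $\gamma_i$, $\sigma$ extends at least $\kappa+4$ in periodicity to either side of $b$ (and likewise for $\sigma'$). Therefore $\sigma \cap \sigma'$ is a subsegment of $\gamma_i$ containing $b$ whose $\lambda$-periodicity is at least $2(\kappa+4)$, directly contradicting the bound $\kappa+3$ on $\per{l}{\lambda_1 \cap \lambda_2}$ supplied by Lemma \ref{lem:periodic-intersection}.

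The main subtlety I anticipate is the direction-counting step in the $i \neq j$ case: one must carefully verify that $b$ being in the \emph{interior} of $\ol{\sigma}$ (rather than on its boundary) is precisely what forces $\sigma$ to leave $b$ along both of $\gamma_i$'s local directions. This geometric observation powers both cases and is the only place real care is required; once it is nailed down, the remainder is a direct appeal to the periodic-intersection and matching lemmas already in hand.
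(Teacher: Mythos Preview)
Your argument is correct and follows the route the paper intends: the corollary is stated without proof, immediately after Lemma~\ref{lem:match-up}, so the implicit argument is exactly your $i\neq j$ case --- apply Lemma~\ref{lem:match-up} to conclude both maximal segments lie in a common translate $\lambda'$, then observe that a geodesic line cannot use three distinct directions at $b$. Your direction-counting is the right way to make this precise, and your care about $b$ being in the \emph{interior} (so that each $\sigma$ genuinely occupies both local directions of its $\gamma_m$) is exactly the point that needs checking.

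One simplification: your $i=j$ case is in fact vacuous. By Proposition~\ref{prop:periodic-core} the $(\lambda,\lambda_0)$-periodic decomposition of a single geodesic $\gamma_i$ is well defined and has the concatenation form $\rho_1 * c_1 * \rho_2 * \cdots * c_n * \rho_{n+1}$, so distinct stable cores in the same $\gamma_i$ have disjoint interiors and cannot both contain $b$. Your overlap-versus-$\kappa+3$ argument is nonetheless valid (it essentially reproves this disjointness), so nothing is wrong --- it is just more than you need.
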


We'll say that this periodic core is \define{non-linear}.

\begin{cor}\label{cor:domination}
  If $\ol{\sigma_i} \subset \gamma_i, \ol{\sigma_j}\subset \gamma_j$
  are $(\lambda, \lambda_0)$-stable periodic cores and $\ol{\sigma_i}
  \cap \ol{\sigma_j} \neq \emptyset$ then either $\ol {\sigma_i}$ contains
  $\ol{\sigma_j}$ or vice versa. Moreover as segments in $T$ they
  share a common endpoint.
\end{cor}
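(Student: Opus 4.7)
The plan is to apply Lemma \ref{lem:match-up} and then analyze how the common translate $\lambda' = h \cdot \lambda$ meets the tripod $\tau$ at its branchpoint $b$. By Lemma \ref{lem:match-up}, the maximal $(\lambda,\lambda_0)$-normal periodic subsegments $\sigma_i \subset \gamma_i$ and $\sigma_j \subset \gamma_j$ containing $\ol{\sigma_i}, \ol{\sigma_j}$ both lie in $\lambda'$, and by maximality they are the largest unions of consecutive translates of $\lambda_0$ by powers of $hph^{-1}$ sitting inside $\gamma_i \cap \lambda'$ and $\gamma_j \cap \lambda'$ respectively. The shared branch is $[b, v_*] = \gamma_i \cap \gamma_j$, meeting the non-shared arms $[b, v_a] \subset \gamma_i$ and $[b, v_c] \subset \gamma_j$ at $b$, with $v_a, v_c, v_*$ distinct.

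Since $\lambda'$ is a bi-infinite geodesic in a tree and $\tau$ is convex, $\lambda' \cap \tau$ is a subsegment of $\lambda'$ that either misses $b$ (and so lies in a single closed arm at $b$) or passes through $b$ entering and leaving via exactly two of the three arms. I would then case-split. If $\lambda' \cap \tau$ misses $b$, it must lie in the shared arm $[b, v_*]$ (otherwise one of $\sigma_i, \sigma_j$ is trivial, contradicting that its stable core is nonempty), forcing $\sigma_i = \sigma_j$ and hence $\ol{\sigma_i} = \ol{\sigma_j}$. If $\lambda'$ passes through $b$ via the two non-shared arms $[b, v_a]$ and $[b, v_c]$, then $\lambda'$ meets the shared arm only at $b$, so $\sigma_i$ and $\sigma_j$ share at most the single point $b$; trimming by $\kappa+4$ fundamental domains on each side then forces $\ol{\sigma_i} \cap \ol{\sigma_j} = \emptyset$, contradicting the hypothesis. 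In the remaining case $\lambda'$ enters via $[b, v_*]$ and exits via one of the non-shared arms (say $[b, v_a]$), in which case $\lambda' \cap \gamma_j \subset \lambda' \cap \gamma_i$, and this forces $\sigma_j \subset \sigma_i$ with both segments terminating at the same point on the $v_*$-side of $\lambda'$.

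The remaining bookkeeping is to check that a common endpoint of the maximal periodic segments $\sigma_i, \sigma_j$ descends to a common endpoint of the trimmed stable cores $\ol{\sigma_i}, \ol{\sigma_j}$. This is immediate because both $\sigma_i$ and $\sigma_j$ are unions of consecutive translates of $\lambda_0$ by powers of the same element $hph^{-1}$, so removing the $\kappa+4$ outermost translates from a shared endpoint leaves a shared endpoint of the cores. I do not anticipate a genuine obstacle here; the proof is essentially a short consequence of Lemma \ref{lem:match-up}, and the only care required is the case-analysis of how the line $\lambda'$ traverses the tripod at $b$.
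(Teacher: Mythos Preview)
Your argument is correct. You start from Lemma~\ref{lem:match-up} just as the paper does, and your case analysis on how the line $\lambda'$ meets the tripod at $b$ is sound: the ``two non-shared arms'' case is excluded by the hypothesis $\ol{\sigma_i}\cap\ol{\sigma_j}\neq\emptyset$ after trimming, the ``misses $b$'' case forces equality, and the remaining case gives the nesting with a shared endpoint on the $v_*$-side. Your final bookkeeping about the common endpoint surviving the $(\kappa+4)$-trimming is also right, since both $\sigma_i$ and $\sigma_j$ are unions of consecutive $hph^{-1}$-translates of $\lambda_0$ with the same terminal translate on the shared side.

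The paper's proof takes a shorter route: rather than analysing $\lambda'\cap\tau$ directly, it invokes Corollary~\ref{cor:branch-point} (at most one stable core is non-linear) to assume immediately that, say, $\ol{\sigma_i}$ lies in a single branch, and then observes that orienting that branch toward $b$ forces $\ol{\sigma_i}$ and $\ol{\sigma_j}$ to be coinitial. Your case split is in effect a self-contained reproof of the relevant instance of Corollary~\ref{cor:branch-point}; what you gain is that the argument stands on its own and makes the geometry of $\lambda'$ through the tripod explicit, while the paper's version is terser but relies on the reader having internalised the preceding corollary.
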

\begin{proof}
	By Lemma \ref{lem:matchup} we have $\ol{\sigma_i} \sim \ol{\sigma_j}$ and they're not disjoint. By Corollary \ref{cor:branch-point} we may assume w.l.o.g. that $\ol{\sigma_i}$ is contained in a branch. If we orient the branch 
so that it terminates at the branchpoint we have that $\ol{\sigma_i}$ and $\ol{\sigma_j}$ are 
coinitial. The result now follows immediately.
\end{proof}

\begin{defn}
  Let $\ol{\sigma_i} \subset \gamma_i$ be an
  $(\lambda,\lambda_0)$-stable periodic core. Then any other periodic
  core $\rho$ in $\gamma_{j}, j \neq i$ such that $\rho \sim
  \ol{\sigma_i}$ is called a \define{dual} of $\ol{\sigma_i}$ in
  $\gamma_j$. \end{defn}

\begin{defn}
  Let $\ol{\sigma_i} \subset \gamma_i, \ol{\sigma_m}\subset \gamma_m$
  be $(\lambda, \lambda_0)$-stable periodic cores s.t. $\ol{\sigma_i}
  \subsetneq \ol{\sigma_m}$. Then we say $ \ol{\sigma_m}$
  \define{dominates} $\ol{\sigma_i}$.
\end{defn}

\begin{prop}\label{prop:domination-bound}
  Let everything be as above and let $\ol{\sigma_m}\subset \gamma_m$ be a
  $(\lambda,\lambda_0)$-stable periodic core. Then we have the
  following: \begin{itemize}
  \item If $\ol{\sigma_m}$ has no duals then it must be contained in
    the subinterval of $ \gamma_m$ of radius $(\kappa+4)|\lambda_0|$
    centered at $b$, so \[\per{l}{\ol{\sigma_m}} \leq 2(\kappa +4).
    \]
  \item If $\ol{\sigma_m}$ has one dual $\ol{\sigma_i}$
    then \[\per{l}{\ol{\sigma_m}} \leq \per{l}{\ol{\sigma{i}}} +
    2(\kappa+4).\]
  \item If $\ol{\sigma_m}$ has two duals $\ol{\sigma_i},
    \ol{\sigma_j}$ then $\ol{\sigma_m}$ is non-linear and it dominates
    them both, moreover \[\per{l}{\ol{\sigma_m}} = \per{l}{\ol
      {\sigma_i}}+\per{l}{\ol{\sigma_j}} + 2(\kappa+4).\]
	\end{itemize}
\end{prop}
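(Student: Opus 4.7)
The plan is to perform a case analysis on the intersection $S := \lambda' \cap \tau$, where $\lambda'$ is the unique $\H$-translate of $\lambda$ containing the maximal normal periodic segment $\sigma_m \supset \ol{\sigma_m}$. Because $\tau$ is a convex subtree of the tree $\T$, $S$ is a connected sub-segment of $\lambda'$, and for each $k \in \{1,2,3\}$ the segment $\sigma_k$ agrees with $\gamma_k \cap S$ up to a $(\lambda,\lambda_0)$-alignment error at each endpoint of at most $|\lambda_0|$. I would first split according to whether $S$ passes through the branchpoint $b$. If $S$ lies in a single branch $\beta$ of $\tau$, then $\beta$ is shared between $\gamma_m$ and exactly one other $\gamma_{m'}$, and the maximal periodic segments $\sigma_m$ and $\sigma_{m'}$ coincide; this yields exactly one dual $\ol{\sigma_{m'}}$ with $\per{l}{\ol{\sigma_m}} = \per{l}{\ol{\sigma_{m'}}}$, so the ``one dual'' bullet of the proposition holds trivially.

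Otherwise $b$ lies in the interior of $S$, and I would decompose $S = S_1 * \{b\} * S_2$ with $S_1, S_2$ in distinct branches, setting $p_k := \per{l}{S_k}$. Exactly one of the three $\gamma_k$'s, the ``central'' one $\gamma_{m^*}$, contains both of the branches carrying $S_1$ and $S_2$; the other two each contain only one of them. Corollary \ref{cor:branch-point} lets me take $\ol{\sigma_m}$ to be the undominated representative of its $\sim$-class inside $\tau$, forcing $m = m^*$ and hence $\sigma_m = S$ with $\per{l}{\ol{\sigma_m}} = p_1 + p_2 - 2(\kappa+4)$ up to alignment.

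Counting which $p_k$ exceed $2(\kappa+4)$ now splits this case into the three bullets of the proposition. If neither does, then no stable core exists in any other $\gamma_k$, so $\ol{\sigma_m}$ has no duals, $\per{l}{\ol{\sigma_m}} \leq 2(\kappa+4)$, and both endpoints of $\ol{\sigma_m}$ lie within $(\kappa+4)|\lambda_0|$ of $b$. If exactly one $p_k$ (say $p_1$) exceeds $2(\kappa+4)$, then precisely one dual $\ol{\sigma_i}$ exists with $\per{l}{\ol{\sigma_i}} = p_1 - 2(\kappa+4)$, whence $\per{l}{\ol{\sigma_m}} - \per{l}{\ol{\sigma_i}} = p_2 \leq 2(\kappa+4)$. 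If both $p_1, p_2 > 2(\kappa+4)$, then the duals $\ol{\sigma_i}, \ol{\sigma_j}$ both exist; since $\ol{\sigma_m}$ extends past $b$ by strictly more than $(\kappa+4)|\lambda_0|$ on each side, it contains $b$ in its interior, is therefore non-linear and strictly contains each dual (hence dominates both), and the identity $\per{l}{\ol{\sigma_m}} = \per{l}{\ol{\sigma_i}} + \per{l}{\ol{\sigma_j}} + 2(\kappa+4)$ follows by direct substitution.

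The main technical obstacle will be bookkeeping the $(\lambda,\lambda_0)$-normal alignment: each $\sigma_k$ is a union of integer translates $h p^n h^{-1} \lambda_0$ and so can differ from $\gamma_k \cap \lambda'$ by up to $|\lambda_0|$ at each endpoint. One has to verify that these alignment discrepancies are absorbed by the $\kappa+4$ periodicity chopped off each side in passing from $\sigma_k$ to $\ol{\sigma_k}$, so that none of the stated equalities or inequalities is spoiled.
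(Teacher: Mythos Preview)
The paper does not give a written proof; it supplies only Figure~\ref{fig:domination} illustrating the third bullet and declares all three points obvious from the definitions. Your case analysis on $S=\lambda'\cap\tau$ and the position of the branchpoint $b$ is precisely the geometry that figure encodes, spelled out, so the approaches coincide.

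There is one real gap. The step ``Corollary~\ref{cor:branch-point} lets me take $\ol{\sigma_m}$ to be the undominated representative of its $\sim$-class, forcing $m=m^*$'' is not a reduction the hypotheses permit: $\ol{\sigma_m}$ is \emph{given}, not chosen, so you must also handle $m\neq m^*$. For the first two bullets this is harmless---when $m\neq m^*$ the core $\ol{\sigma_{m^*}}$ always exists and dominates $\ol{\sigma_m}$, so bullet one is vacuous and bullet two follows from $\per{l}{\ol{\sigma_m}}\leq\per{l}{\ol{\sigma_{m^*}}}$. Bullet three is another matter: if both $p_1,p_2>2(\kappa+4)$ then all three cores exist and are pairwise $\sim$, so a \emph{linear} core $\ol{\sigma_m}$ with $m\neq m^*$ also has two duals, yet neither contains $b$ in its interior nor dominates $\ol{\sigma_{m^*}}$. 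The paper's figure and the subsequent use of the proposition in the proof of Theorem~\ref{thm:main} make clear that the intended subject of bullet three is the non-linear core; your argument establishes exactly that intended statement, but you should flag this explicitly rather than hide it behind an unjustified WLOG. Your closing remark about the $(\lambda,\lambda_0)$-alignment bookkeeping is apt and is indeed the only remaining technical nuisance.
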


Instead of giving a proof, we provide Figure \ref{fig:domination},
from which all three points should be obvious, given the definitions
of $(\lambda,\lambda_0)$-stable periodic cores.

\begin{figure}
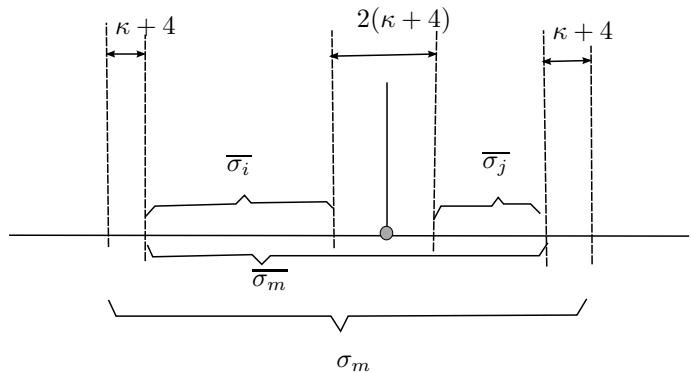

\centering
\psfrag{k+3}{$\kappa+4$}
\psfrag{k+6}{$2(\kappa+4)$}
\psfrag{osi}{$\ol{\sigma_i}$}
\psfrag{osj}{$\ol{\sigma_j}$}
\psfrag{osm}{$\ol{\sigma_m}$}
\psfrag{sm}{$\sigma_m$}
\insertimage{domination}{0.75}
\caption{A proof of the third item of Proposition \ref{prop:domination-bound}. Here $\sigma_m$ 
is the maximal $(\lambda,\lambda_0)$-normal periodic subsegment of $\gamma_m$ containing $\ol
{\sigma_m}$.}
\label{fig:domination}
\end{figure}

\section{$p$-stable reduced forms}\label{sec:p-stable-forms}
Since we are dealing with one-edged splittings, it is convenient to
work combinatorially with reduced and normal forms. Consider the word
in $A*_CB$ \[w = a_1 b_1 a_2 b_2 \cdots a_n b_n; a_i \in A, b_i\in
B.\] The factors $a_i,b_j$ are called syllables. We say that
\define{$w$ is in reduced form} if none of the syllables, except maybe
$a_1$ or $b_n$ lie in $C$, and if some syllable lies in $C$ then we must have
$n=1$ and $b_1 = 1$.

Analogously consider the word in the HNN extension $\bk{A,t \mid t^\mo
  c_i t = d_i; i I}$ \[ u = a_it^{n_1}\cdots a_mt^{n_m}; a_i \in
A, n_j \in \Z \] here the factors $a_i, t^{n_j}$ are called syllables
and we say that \define{$u$ is in reduced form} if all the syllables,
except $a_1$ or $t^{n_m}$ are non trivial and there are no subwords of
the form $t^n d t^{-m}$ or $t^{-n} c t^m$, where $d \in \bk{d_i}, c\in
\bk{c_i}$ and $n,m \in \Z_{> 1}$.

We refer the reader to Chapter IV of \cite{Lyndon-Schupp-1977} for
further details on normal forms and to \cite{Serre-arbres} for further
details on the connection between free products with amalgamation, HNN
extensions and group actions on trees.

Now let $\lambda$ be some periodic line in $\T$. By replacing $\lambda$ by some
translate, if necessary, we may assume that $v_0 \in \lambda$, moreover we
can assume that $\lambda$ has a period $p$ which can be written in reduced
form \be\label{eqn:period} p= a_1d_1\cdots a_nd_n\ee where $a_i \in A$
and either
\[
\left\{ \begin{array}{ll}
d_j = t^{n_j}; n_j \in \Z  & \tr{if~} H \tr{~is an HNN extension} \\
d_j \in B  & \tr{if~}H\tr{~is an amalgam.}
\end{array}\right.
\] and all syllables are nontrivial. In this case we say $p$ is
\define{cyclically reduced}. We can finally pick a fundamental
domain $\lambda_0$ of $\lambda$ such that $v_0$ is an endpoint of
$\lambda_0$, specifically we can pick $\lambda_0 = [v_0,p\cdot v_0]$.

Let $w$ be some reduced word and let $\rho$ denote the path $[v_0,w
\cdot v_0]$. Let \[ \rho = \rho_1 * c_1 * \cdots * c_n * \rho_n\] be
its $(\lambda,\lambda_0)$- periodic decomposition. Our goal is to
rewrite $w$ so that it reflects this periodic decomposition.

\begin{defn} We say that a product $g_1g_2\cdots g_n$ is
  \define{semi-reduced} if the lengths \[ |[v_0 ,g_1g_2\cdots g_n
  v_0] | = \sum_{i=1}^n |[v_0,g_iv_0]|\] are equal. Equivalently in
  passing to a reduced form of the product, none of the syllables
  cancel completely. \end{defn}

Let $\rho = [v_0,u_0] * \ol{\sigma} * [u_1,w \cdot v_0]$ where
$\ol{\sigma} \subset h\cdot \lambda = \lambda'$ is a $(\lambda,\lambda_0)$-stable periodic
core. First note that both $u_0,u_1$
are translates of $v_0$.  Moreover we can pick $h$ so that $h\cdot v_0
= u_0.$ Figure \ref{fig:tree-diag} is provided to make the following argument more understandable.

\begin{figure}
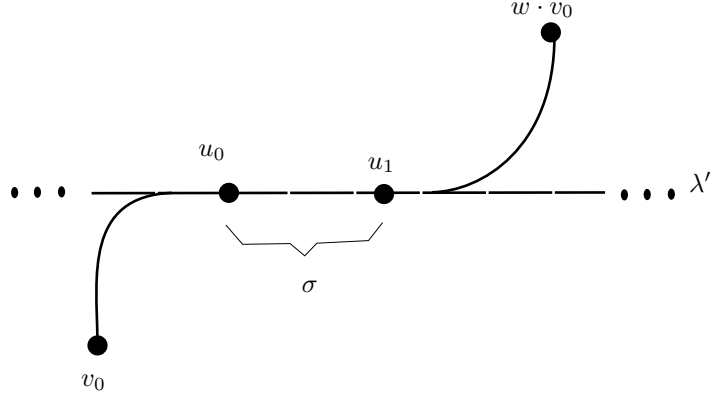

\centering
\psfrag{v0}{$v_0$}
\psfrag{u0}{$u_0$}
\psfrag{u1}{$u_1$}
\psfrag{wv0}{$w\cdot v_0$}
\psfrag{s}{$\sigma$}
\psfrag{l}{$\lambda'$}

\insertimage{tree-diag}{0.75}
\caption{An $\lambda_0$-stable periodic core}
\label{fig:tree-diag}
\end{figure}

It follows by considering how $\H$ acts on $\T$ that we can divide the
word $w$ as $w = w_1 w_2 w_3$ where the $w_i$ are subwords of $w$ and
$w_1\cdot v_0 = u_0$. From this it follows that $h = w_1 a_1$ for some
$a_1 \in A$. Now since $h p h^\mo$ is a period of $\lambda'$ we have that
\[(w_1 a_1 p^m) \cdot v_0 = u_1\] where $|m| = \per{l}{\sigma}$. Finally
it is easy to see that we can pick a the coterminal subword $w_3$ so
that for some $a_2 \in A$ we have \[(w_1 a_1 p^m a_2 w_3) \cdot v_0 =
w \cdot v_0.\]

Now looking at the path\[ [v_0, (w_1 a_1) \cdot v_0] * (w_1
a_1)\cdot[v_0,p^m\cdot v_0] * (w_1 a_1 p^m a_2)\cdot[v_0,w_3 \cdot
v_0]\] we see that the product $(w_1a_1)(p^m)(a_2w_3)$ is semi-reduced
and that $(w_1a_1)$ as a reduced word can be arranged to coincide with
$w_1$ except for maybe the last syllable and that $a_2 w_3$ as a
reduced word can be arranged to coincide with $w_3$ except maybe for
the first syllable. So we were able to rewrite $w = w'_1 p^m w'_2$ as
a semi-reduced product. Now note that we can repeat this process for
the $w_1'$ and $w_2'$ factors. From this discussion we deduce the
following lemma:

\begin{lem}\label{lem:periodic-normal-form}
  Let $\H$, $\lambda, \lambda_0$ and let $p$ be cyclically
  reduced. Let $w$ be a reduced word in $\H$ and let $\rho = \rho_1 *
  c_1 * \cdots * c_n *\rho_{n+1}$ be a $(\lambda,\lambda_0)$-periodic
  decomposition of the path $[v_0,w\cdot v_0]$.  Then we can rewrite
  $w$ as a semi-reduced product
  \[w = w_1 p^{m_1} w_2 p^{m_2} \cdots p^{m_n} w_{n+1}\] which has the
  following properties:
  \begin{enumerate}
  \item $(w_1p^{m_1}\cdots p^{m_{r-1}}w_r)\cdot \lambda \supset c_r$ for all
    $r\leq n$.
  \item $|m_i| = \per{\lambda}{c_i}$ and all the factors are non trivial
      and in reduced form. 
      \end{enumerate}
  It follows that the tuple of integers $(m_1,\ldots,m_n)$ is well-defined for
  $w$.
\end{lem}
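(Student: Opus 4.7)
The plan is to prove this by induction on the number $n$ of stable periodic cores in the $(\lambda,\lambda_0)$-periodic decomposition of $\rho = [v_0,w\cdot v_0]$. The base case $n=0$ is trivial (simply take $w_1=w$), and the single-core case $n=1$ is essentially carried out verbatim in the two paragraphs preceding the lemma: identify $u_0,u_1\in\T$ as the endpoints of $c_1$, split $w = w_1 w_2 w_3$ according to the way $\H$ acts on $\T$ so that $w_1\cdot v_0 = u_0$ and $(w_1 w_2)\cdot v_0 = u_1$, then produce $h=w_1 a_1$ with $h\cdot v_0 = u_0$ and $a_2\in A$ so that $(w_1 a_1 p^{m_1} a_2 w_3)\cdot v_0 = w\cdot v_0$ with $|m_1|=\per{\lambda}{c_1}$. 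The three concatenated arcs in the displayed path just before the lemma show the product is semi-reduced.

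For the inductive step, I would apply the single-core argument to isolate the \emph{leftmost} periodic core $c_1$. This gives $w = w_1 p^{m_1} \hat w$ as a semi-reduced product, where $\hat w = a_2 w_3$ is the tail subword and where $(w_1 p^{m_1})\cdot v_0$ is the terminal endpoint $u_1$ of $c_1$. Setting $g = w_1 p^{m_1}$, the path $[u_1, w\cdot v_0] = g\cdot [v_0,\hat w\cdot v_0]$ is naturally identified with $[v_0,\hat w\cdot v_0]$ by the isometric action of $g^\mo$. Under this identification the remaining pieces $\rho_2 * c_2 *\cdots * c_n *\rho_{n+1}$ of $\rho$ pull back to the full $(\lambda,\lambda_0)$-periodic decomposition of $[v_0,\hat w\cdot v_0]$ (using Proposition \ref{prop:periodic-core}, which guarantees well-definedness of the decomposition, together with the fact that each $g^{-1}\cdot c_r$ is still a stable periodic core for the translate $g^{-1}\cdot \lambda$; but since we only care about the $\H$-orbit of $\lambda$ the decomposition is intrinsic). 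Induction then supplies $\hat w = w_2 p^{m_2}\cdots p^{m_n} w_{n+1}$ as a semi-reduced product with the required properties. Concatenating gives the full semi-reduced expression for $w$, and property (1) follows directly from the construction: $(w_1 p^{m_1} \cdots p^{m_{r-1}}w_r)\cdot v_0$ is, by construction, the initial endpoint of $c_r$, hence $(w_1 p^{m_1}\cdots w_r)\cdot \lambda$ is the unique translate of $\lambda$ containing $c_r$.

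For the well-definedness of $(m_1,\ldots,m_n)$, the absolute value $|m_i| = \per{\lambda}{c_i}$ is determined by $w$ through the intrinsically defined decomposition of $\rho$ (Proposition \ref{prop:periodic-core}). The sign of $m_i$ is determined by whether the oriented axis of the period $(w_1\cdots w_i a)\, p\,(w_1\cdots w_i a)^\mo$ agrees with the direction of travel along $\rho$ through $c_i$; this is uniquely pinned down once we fix the global orientation conventions on $\lambda$ and $\rho$, so the tuple is determined by $w$.

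The main obstacle I expect is bookkeeping rather than depth: one has to check carefully that the rewriting $w \rightsquigarrow w_1 a_1 p^{m_1} a_2 w_3$ does not disturb the tail's periodic decomposition, i.e.\ that absorbing the syllables $a_1,a_2$ into the neighboring reduced-form factors (as described in the paragraph preceding the lemma) does not alter which subsegments of $\rho$ are $(\lambda,\lambda_0)$-normal periodic or change the stable periodic cores $c_2,\ldots,c_n$. This is where Corollary \ref{cor:segment-uniqueness} is essential: each long periodic subsegment lies in a \emph{unique} translate of $\lambda$ (since $\per{\lambda}{c_i}>2(\kappa+4)>\kappa+3$), so the combinatorial data $(m_2,\ldots,m_n)$ extracted at the next step cannot depend on the particular choice of $a_1,a_2$ made at the current step.
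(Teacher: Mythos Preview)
Your proposal is correct and follows essentially the same approach as the paper: the discussion preceding the lemma carries out the single-core rewriting $w = w_1' p^m w_2'$ in detail and then simply says ``we can repeat this process for the $w_1'$ and $w_2'$ factors,'' which is exactly the recursion you have formalized (you peel cores off from the left, the paper recurses on both tails, but this is cosmetic). Your extra care about equivariance of the $(\lambda,\lambda_0)$-periodic decomposition under the $\H$-action and the invocation of Corollary~\ref{cor:segment-uniqueness} to ensure the bookkeeping goes through is more explicit than the paper's treatment, but not a different argument.
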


\begin{conv}
  Abusing notation, we shall denote $(|m_1|,\ldots,|m_n|)$ by
  $(m_1,\ldots,m_n)$.
\end{conv}

\begin{defn}\label{defn:periodic-normal-forms}
  Let $w \in \H$ be some reduced word and let some $p \in \H$ be
  cyclically reduced. The rewriting $w = w_1 p^{m_1} w_2 p^{m_2}
  \cdots p^{m_n} w_{n+1}$ as given in Lemma
  \ref{lem:periodic-normal-form} is called a \define{$p$-stable
    reduced form}. The subwords $p^{m_i}$ are called \define{
    $p$-stable occurrences} and the entries of tuple of positive
  integers $(m_1,\ldots,m_n)$ are called the \define{$p$-stable
    exponents} of $w$.
\end{defn}

\section{$p$-bands in Ol'shanskii diagrams}\label{sec:cutting}

For this section we fix a nontrivial homomorphism
$\phi: G \rightarrow \H$. We also fix a cyclically reduced period \[p =
a_1d_1 \cdots a_n d_n \in H\] i.e. one that satisfies the requirements
of (\ref{eqn:period}) as given in Section
\ref{sec:p-stable-forms}. Equivalently $p$ as a word in the generators
corresponds to an edge path in $\twocomplx{H}$, and in particular as
an edge path its first edge lies in $\twocomplx{A}\subset
\twocomplx{H}$. Specifically  $p$, as an edge-path has the
form: \begin{eqnarray}
  a_1 e b_2 e^\mo \cdots b^n e^\mo ;& & a_i \subset \twocomplx{A}, b_i \subset
  \twocomplx{B} \label{eqn:amalgam-path}\\
  a_1 e^{n_1} \cdots a_m e^{n_m} ; & & a_i \subset \twocomplx{A}, n_i \in \Z \label{eqn:hnn-path}
\end{eqnarray}
where the $a_i,b_i$ are understood to be edge paths and where (\ref{eqn:amalgam-path}) is a 
period where $\H$ is a free product with amalgamation and (\ref{eqn:hnn-path}) is a period 
where 
$\H$ splits as an HNN extension.

Let $\bk{Y \mid S}$ be a finite presentation of $G$. W.l.o.g. we may
assume, adding finitely more elements to $Y$ and $S$ if necessary, that the
presentation is triangular, i.e. that all the relations in $S$ have
length 3. We may also assume that no generator occurs twice in a
relator so that each 2-cell is an almost embedded triangle in
$\twocomplx{G}$ (the three corners get identified).

Let $\twocomplx{G}$ be the presentation 2-complex for $\bk{Y \mid
  S}$. The homomorphism $\phi$ enables us to construct combinatorial
maps from $\twocomplx{G}$ to $\twocomplx{H}$. We can ensure that this
combinatorial map is such that for each $y_i \in Y$, $\phi(y_i)$ is
sent to an edge path (equivalently a word) in $p$-stable reduced
form. Since $p$-stable reduced forms are semi-reduced products, it
follows this map from $\twocomplx{G}^{(1)}$ to $\twocomplx{H}^{(1)}$
induces a labeling of $\twocomplx{G}^{(1)}$ by edge paths in
$\twocomplx{H}$ that can extend to an $E$- tight Ol'shanskii diagram
$\odiag{\phi}$.

Consider the refinement $\refine{G}{\phi}$. Let $y_i$ be an edge in
$\twocomplx{G}^{(1)}$, along $y_i$ we can read in $\refine{G}{\phi} $
the edge path in $p$-stable reduced form in $\twocomplx{H}$
corresponding to our choice $p$-stable reduced form for $\phi(y_i)$.

\begin{defn}
  We call a combinatorial subarc $\sigma$ of $y_i \cap
  \refine{G}{\phi}$ \define{a $p$-stable subarc} if along it we read a
  $p$-stable occurrence of $\phi(y_i)$.
\end{defn}

This next result follows immediately from Lemma
\ref{lem:periodic-normal-form}, the definition of the resolution
$\res{\phi}:(\wt{\twocomplx{G}},\wt{y_0}) \rightarrow (\T,v_0)$ and
the fact that edges of $\twocomplx{G}$ are mapped via $\odiag{\phi}$
to $p$-stable reduced paths.

\begin{lem}\label{lem:matchup}
  Let everything be as above. Let $y_i \in Y$ and consider the edge
  $f$ in the universal cover $(\wt{\twocomplx{G}},\wt{y_0})$ labeled
  $y_i$ going from $\wt{y_0}$ to $y_i \cdot
  \wt{y_0}$. Then \begin{enumerate}
    \item The map $\res{\phi}:(\wt{\twocomplx{G}},\wt{y_0})
      \rightarrow (\T,v_0)$ sends the edge $f$ to the geodesic
      $[v_0,{\phi(y_i)\cdot v_0}].$
    \item $\res{\phi}$ maps a $p$-stable subarc of $f$ corresponding
      to a $p$-stable occurrences in $\phi(y_i)$ onto the corresponding
      $(\lambda,\lambda_0)$-stable periodic core of $[v_0,\phi(y_i)\cdot v_0]$.
  \end{enumerate}
\end{lem}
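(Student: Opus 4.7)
The plan is to unwind definitions and apply Lemma \ref{lem:periodic-normal-form} directly, so both parts should fall out almost mechanically. First I would trace what $\odiag{\phi}$ does to the edge $f$: by construction of the Ol'shanskii diagram, $f$ is labeled by the edge path in $\twocomplx{H}$ corresponding to our fixed $p$-stable reduced form of $\phi(y_i)$. Lifting this edge path to $\wt{\twocomplx{H}}$ starting at $\wt{x_0}$ yields an arc terminating at $\phi(y_i)\cdot \wt{x_0}$, and pushing through the equivariant quotient $\wt{\twocomplx{H}}\to\T$ from Section \ref{sec:e-tracks} gives an edge path in $\T$ from $v_0$ to $\phi(y_i)\cdot v_0$.

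For point (1), the key is to observe that the projected edge path is the \emph{geodesic} $[v_0,\phi(y_i)\cdot v_0]$. This is because $p$-stable reduced forms are semi-reduced products, and by the standard Bass-Serre dictionary (Chapter IV of \cite{Lyndon-Schupp-1977} combined with \cite{Serre-arbres}) semi-reducedness is precisely the statement that the corresponding path in $\T$ does not backtrack between consecutive vertex spaces. A non-backtracking edge path between two vertices of a tree is the unique geodesic, so (1) follows.

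For point (2), a $p$-stable subarc $\sigma\subset f$ is by definition the portion of $f$ over which a specific $p$-stable occurrence $p^{m_i}$ is read in the decomposition $\phi(y_i) = w_1 p^{m_1} w_2 \cdots p^{m_n} w_{n+1}$. By (1) the image of $\sigma$ under $\res{\phi}$ is the subpath $(w_1 p^{m_1}\cdots w_i)\cdot [v_0,p^{m_i}\cdot v_0]$ of $[v_0,\phi(y_i)\cdot v_0]$. Property (1) of Lemma \ref{lem:periodic-normal-form} places this subpath inside the translate of $\lambda$ carrying the corresponding $(\lambda,\lambda_0)$-stable periodic core $c_i$, while property (2) forces its length to equal $|m_i||\lambda_0|$, matching the length of $c_i$. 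Coinitiality of the two subpaths at $(w_1 p^{m_1}\cdots w_i)\cdot v_0$ then pins the image down to be $c_i$ exactly.

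The only mildly delicate step is the translation between ``semi-reduced product'' and ``non-backtracking path in $\T$,'' but this is standard Bass-Serre machinery and does not involve the acylindricity hypothesis at all. Everything else is essentially a bookkeeping exercise of chasing the decomposition of $\phi(y_i)$ through the resolution $\res{\phi}$, and I do not anticipate any hidden obstacle.
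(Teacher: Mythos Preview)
Your proposal is correct and follows exactly the approach the paper indicates: the paper does not give a formal proof but simply states that the lemma ``follows immediately from Lemma~\ref{lem:periodic-normal-form}, the definition of the resolution $\res{\phi}$, and the fact that edges of $\twocomplx{G}$ are mapped via $\odiag{\phi}$ to $p$-stable reduced paths.'' Your write-up is a faithful and slightly more detailed unpacking of precisely these three ingredients.
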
 

Let $y_iy_jy_k \in S$ be an arbitrary relation where $y_i,y_j,y_k \in
Y$ and let $D_{ijk} \subset \twocomplx{G}^{(2)}$ be the corresponding
2-cell. The set \[\underbrace{[v_0,\phi(y_i)\cdot v_0]}_{\gamma_i}
\cup \underbrace{\phi(y_i) \cdot [v_0,\phi(y_j)\cdot v_0]}_{\gamma_j}
\cup \underbrace{\phi(y_iy_j) \cdot [v_0,\phi(y_k)\cdot
    v_0]}_{\gamma_k} \] forms a tripod $\gamma_i * \gamma_j * \gamma_k
= \tau$ in $\T$. We use can now the correspondence given in 2. of Lemma
\ref{lem:matchup} to associate the $p$-stable subarcs of $y_i,y_j,y_k$
to the $(\lambda,\lambda_0)$-stable periodic cores of $\gamma_i, \gamma_j,
\gamma_k$, respectively. 

\begin{defn}Let $y_r,y_w$ lie in some 2-cell $D_{ijk}$. We say that
  $p$-stable subarcs $\rho_r \subset y_r$ and $\rho_s \subset y_s$ are
  \define{dual} if the corresponding $(\lambda,\lambda_0)$-stable
  cores of $\gamma_r$ and $\gamma_s$ respectively are dual for $r,s
  \in \{i,j,k\}$. Similarly we say $\rho_r$ \define{dominates}
  $\rho_s$ if the periodic core corresponding to $\rho_r$ dominates the periodic core corresponding to $\rho_s$ in $\tau$.
\end{defn}

\begin{lem}\label{lem:p-bands}
  Let everything be as above. And let $\rho_r \subset y_r$ and $\rho_s
  \subset y_s$ be dual $p$-stable subarcs for $r,s \in \{i,j,k\}$ and
  suppose $|\rho_s| \leq |\rho_r|$ (in the sense that the $p$-stable exponent corresponding to $\rho_r$ is greater that the $p$-stable exponent correpsonding to $\rho_s$.) Then there are edge paths $\rho^+,
  \rho^-: [0,1]\rightarrow D_{ijk}$ in the refinement \[D' =
  \refine{G}{\phi} \cap D_{ijk}\] such that
  \begin{itemize}
  	\item $\rho^-$ starts and ends at endpoints of $\rho_r$ and $\rho_s$
	\item $\rho^+$ starts at the other endpoint of $\rho_r$ and ends inside $\rho_s$.
	\item Both $\rho^+,\rho^-$ do not intersect the interior of an
          $E$-cell.
	\item The edge path inside $\rho_r$ between $\rho^-(1)$ and
          $\rho^+(1)$ has the same label as $\rho_s$. 
	\end{itemize}
\end{lem}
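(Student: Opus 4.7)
The plan is to lift $D = D_{ijk}$ to a copy $\wt D$ in the universal cover $\wt{\twocomplx G}$ and to recover $\rho^\pm$ as the two boundary rails of a band of $E$-strips joining the lifts $\wt\rho_r \subset \wt y_r$ and $\wt\rho_s \subset \wt y_s$ of the two dual subarcs. The identification of the periodic structure on the tree side with the $E$-strip structure in $\wt D$ is what makes the argument run.

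First I would apply Lemma~\ref{lem:matchup} to identify $\res{\phi}(\wt\rho_r)$ with the stable periodic core $\ol{\sigma_r} \subset \gamma_r$ and $\res{\phi}(\wt\rho_s)$ with $\ol{\sigma_s}\subset \gamma_s$. Since $\rho_r$ and $\rho_s$ are dual, Lemma~\ref{lem:match-up} places both cores inside a common translate $\lambda' = h\cdot\lambda$, and the hypothesis $|\rho_s|\leq|\rho_r|$ together with Corollary~\ref{cor:domination} yields $\ol{\sigma_s}\subseteq \ol{\sigma_r}$ with a shared endpoint (generically the branchpoint $b$, by Corollary~\ref{cor:branch-point}).

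Next I would analyse the arrangement of $E$-strips in $\wt D$. As one walks along $\wt\rho_r$, the resolution crosses exactly the edges of $\lambda'$ that $\ol{\sigma_r}$ traverses, so there is one $E$-strip crossed by $\wt\rho_r$ for each of these edges; similarly for $\wt\rho_s$ and $\ol{\sigma_s}$. Each such strip has two sides on $\bdy \wt D$, and by Lemma~\ref{lem:e-tight} the components of $\wt{D}'$ are indexed by the vertices of the tripod; the Claim inside the proof of Lemma~\ref{lem:e-tight} prevents a strip from returning to its own side. Consequently, every $E$-strip crossed by $\wt\rho_s$ must re-emerge on an opposite side carrying the matching vertex of $\lambda'$; combined with $\ol{\sigma_s}\subseteq \ol{\sigma_r}$ this forces its second end to lie on $\wt y_r$ and, in fact, inside $\wt\rho_r$. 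I would then show that these $\per{l}{\ol{\sigma_s}}$ strips are pairwise nested: two crossing strips would create a component of $\wt{D}'$ mapping to two distinct vertices of $\T$, contradicting Lemma~\ref{lem:e-tight}. The outermost nested pair bounds a sub-band of $\wt D$ whose two boundary edge paths lie in $\wt{\refine{G}{\phi}}\cap \wt D$ and avoid the interiors of all $E$-cells; projecting back to $D$ yields $\rho^-$ and $\rho^+$. The endpoint conditions come from the coinitiality of $\ol{\sigma_r}$ and $\ol{\sigma_s}$ at the shared endpoint, and the label-matching property follows from the strip-by-strip correspondence, each $E$-strip in the band identifying one $p$-syllable along $\rho_r$ with the corresponding syllable along $\rho_s$.

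The main obstacle is the nesting-and-termination step. One must simultaneously rule out that an $E$-strip starting on $\wt\rho_s$ escapes onto the third side $\wt y_k$, that two such strips cross each other inside $\wt D$, or that a pair of strips isolates a component of $\wt{D}'$ that does not correspond to a vertex of the tripod. Each bad configuration has to be excluded either by contradicting $E$-tightness, producing a simplifiable bigon, or by violating the component-to-vertex bijection of Lemma~\ref{lem:e-tight}. Once the nested band is in place, extracting $\rho^\pm$ and checking the label condition is a formal reading off of the refined boundary.
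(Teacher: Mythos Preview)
Your approach is workable, but it is a substantial detour compared with the paper's argument. After the common first step (lift to $\wt D$, apply Lemma~\ref{lem:matchup} and Corollary~\ref{cor:domination} to obtain $\ol{\sigma_s}\subset\ol{\sigma_r}$ with a shared endpoint), the paper does \emph{not} track individual $E$-strips, argue about nesting, or worry about strips escaping to the third side. It simply invokes Lemma~\ref{lem:e-tight} directly: that lemma says the connected components of $D''=\tr{closure}\big(\wt D\setminus(E\textrm{-cells})\big)$ are in bijection with the vertices of the tripod via $\res\phi$. The common endpoint $p^-$ of $\ol{\sigma_r}$ and $\ol{\sigma_s}$ is a single vertex of $\T$, so the two boundary points of $\wt D$ lying over it (one an endpoint of $\wt\rho_r$, one an endpoint of $\wt\rho_s$) live in the same component of $D''$; any path in that component is $\rho^-$. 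For $\rho^+$ one marks the point $p^+\in\wt\rho_r$ reached after reading $p^{m_s}$ and the remaining endpoint of $\wt\rho_s$; these have the same $\res\phi$-image, hence again share a component of $D''$, and any connecting path in that component is $\rho^+$. The label condition is then automatic from the choice of $p^+$.

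What you are doing, in effect, is re-deriving the relevant instances of this component--vertex bijection by an explicit $E$-strip chase. The obstacles you list (escape to the third side, crossing strips, spurious components) are exactly the phenomena that the proof of Lemma~\ref{lem:e-tight} already excludes; once that lemma is in hand there is no reason to revisit them. Your route does yield something slightly more concrete, namely $\rho^\pm$ realised along $E$-strip boundaries, but the paper only needs existence of paths avoiding $E$-cell interiors; the band is defined afterward as the disc they bound, and its finer rectangular structure is only established later (Corollary~\ref{cor:long-bands}) under the extra hypothesis that the band is long.
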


\begin{proof}
  We pass to the universal cover $(\wt{\twocomplx{G}},\wt{y_0})$ and
  take the lift $\wt{D'}$ of $D'$ whose boundary maps via $\res{phi}$
  onto the tripod $\tau$. It is clear that since the restriction of
  the covering map $\wt{\twocomplx{G}}\rightarrow \twocomplx{G}$ is
  injective on $\wt{D'}$ minus its corners, and is compatible
  with the refinement $\refine{G}{\phi}$, it is sufficient to find paths
  $\rho^+,\rho^-$ with the desired properties in the universal cover.
	
  By Lemma \ref{lem:matchup} and Corollary \ref{cor:domination} we
  have that, via $\res{\phi}$, the lifts $\wt{\rho_r}$ and
  $\wt{\rho_s}$ are mapped to $(\lambda,\lambda_0)$-stable periodic
  cores $\sigma_r$ and $\sigma_s$ in $\T$ resp. where $\sigma_s
  \subset \sigma_r$ and they share a common endpoint $p^-$. Now by
  Lemma \ref{lem:e-tight} and the assumption that $\odiag{\phi}$ is
  $E$-tight, we have that the the $\res{\phi}$-preimages of $p^-$
  correspond to a connected component of \[D'' = \tr{closure}
  \big(\wt{D'} \setminus (E-\tr{cells})\big)\] so we can find a path
  $\rho^-$ satisfying the requirements of the lemma.
	
  By hypothesis we can read $p^{m_r}$ along $\wt{\rho_r}$ and
  $p^{m_s}$ along $\wt{\rho_s}$ with $|m_s| \leq |m_r|$. Consider the
  point $p^+$ in $p_r$ that is at the end of the path labeled
  $p^{m_s}$. It is clear that $p^+$ and the other endpoint of $p_s$
  should map to the same vertex of $\T$ so again by Lemma
  \ref{lem:e-tight} they must lie in the same component of $D''$. Thus
  there are paths $\rho^+,\rho^-$ satisfying the requirements of the
  lemma.
\end{proof}

\begin{defn}
  For any pair of dual $p$-stable subarcs $\rho_s, \rho_r$ of
  $y_r,y_s$ respectively we define the corresponding \define{$p$-band}
  to be the combinatorial disk in $D_{ijk}\cap \refine{G}{\phi}$ that
  is bounded by the four paths $\rho_s, \rho_r$ and $\rho^+,\rho^-$ as
  given in Lemma \ref{lem:p-bands}. The arcs $\rho_s,\rho_r$ are
  called \define{bases}.
\end{defn}

See Figure \ref{fig:p-bands} for an illustration.

\begin{figure}
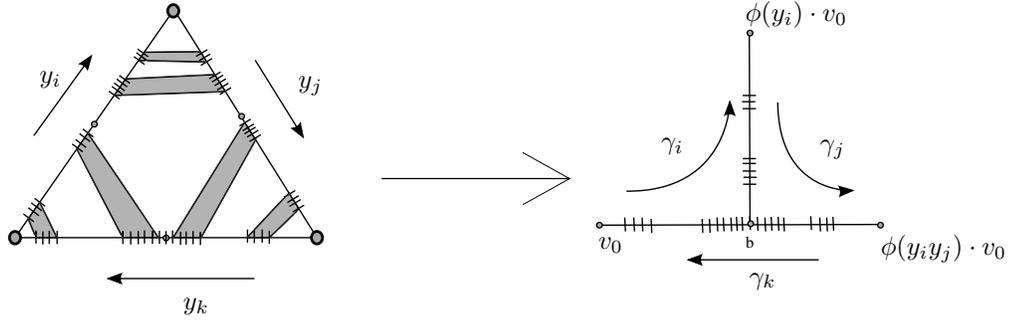

\psfrag{y1}{$y_i$}
\psfrag{y2}{$y_j$}
\psfrag{y3}{$y_k$}
\psfrag{p1}{$\gamma_i$}
\psfrag{p2}{$\gamma_j$}
\psfrag{p3}{$\gamma_k$}
\psfrag{v0}{$v_0$}
\psfrag{v0}{$v_0$}
\psfrag{yvo}{$\phi(y_i)\cdot v_0$}
\psfrag{yyvo}{$\phi(y_iy_j)\cdot v_0$}
\centering
\insertimage{p-bands}{1}
\caption{On the left, hashed segments represent $p$-stable subsegments and the grey rectangles 
are the $p$-bands. On the right the tripod $\tau$ and hashed segments representing the  
corresponding $(\lambda,\lambda_0)$-stable periodic cores} 
\label{fig:p-bands}
\end{figure}

\begin{lem}\label{lem:1-sides}
  Suppose that a band has bases along which we read $p^m$ with $m >
  \kappa$. Then the elements of $\H$ read along $\rho^+$ and $\rho^-$
  must be trivial.
\end{lem}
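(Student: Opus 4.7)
The plan is to exploit the fact that the $p$-band is a disk in $\twocomplx{G}$, extract a relation in $\H$ from its boundary, and combine that with the ellipticity information provided by Lemma \ref{lem:e-tight} and $\kappa$-acylindricity.

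First I trace the boundary of the band. Since it bounds a disk inside $D_{ijk}$, the boundary loop is null-homotopic in $\twocomplx{G}$, so its image under the combinatorial map induced by $\odiag{\phi}$ represents $1 \in \H$. By the last bullet of Lemma \ref{lem:p-bands} the two bases both carry the label $p^m$, so reading the four sides yields
\[ \alpha \, p^m \, \beta^{-1} \, p^{-m} = 1 \text{ in } \H, \]
equivalently $\alpha = p^m \beta p^{-m}$, where $\alpha, \beta$ denote the elements read along $\rho^-, \rho^+$ respectively.

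Next I locate $\alpha, \beta$ inside $\H$. By the conventions of Section \ref{sec:p-stable-forms}, $p$ begins with a syllable in $A$, so $p$ is an edge-loop based at the $0$-cell $x_0 \in \twocomplx{A}$; consequently every corner of the band is pinned to $x_0$, and in particular $\rho^-$ and $\rho^+$ project to loops at $x_0$. Because $\odiag{\phi}$ is $E$-tight and $\rho^\pm$ avoid the interiors of $E$-cells (Lemma \ref{lem:p-bands}), their lifts to $\wt{\twocomplx{G}}$ lie inside single connected components of the lifted $2$-cell minus its $E$-cells; Lemma \ref{lem:e-tight} then collapses each such component via $\res{\phi}$ to a single vertex of $\T$. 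Equivalently, the loops $\rho^\pm \subset \twocomplx{H}$ sit inside the vertex subcomplex $\twocomplx{A}$, so $\alpha, \beta \in A = \stab{\H}{v_0}$.

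Finally I invoke $\kappa$-acylindricity. Both $\alpha$ and $\beta$ fix $v_0$, so the identity $\alpha = p^m \beta p^{-m}$ forces $\alpha$ to fix $p^m \cdot v_0$ as well. Since $v_0 \in \lambda$, the segment $[v_0, p^m v_0] \subset \lambda$ has length $m|\lambda_0| \geq m > \kappa$, so $\alpha$ pointwise fixes a segment of length strictly greater than $\kappa$. By $\kappa$-acylindricity, $\alpha = 1$, and substituting back gives $\beta = p^{-m} \cdot 1 \cdot p^m = 1$.

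The main obstacle I anticipate is the middle step: showing $\alpha, \beta \in A$ (and not merely conjugates of $A$) relies on the observation that $p$ is based at $x_0 \in \twocomplx{A}$, which pins all corners of the band to $x_0$ and forces $\rho^\pm$ to be loops in the subcomplex $\twocomplx{A}$. Once that is nailed down, the first and third steps are short applications of the relation and acylindricity.
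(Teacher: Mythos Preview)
Your proof is correct and follows essentially the same route as the paper's: extract a commutation-type relation from the boundary of the band, argue that the side-paths represent elements of $A=\stab{\H}{v_0}$ because they avoid $E$-cells and are based at $x_0\in\twocomplx{A}$, and then kill them with $\kappa$-acylindricity applied to the segment $[v_0,p^{\pm m}\cdot v_0]$.

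One small remark: your appeal to Lemma~\ref{lem:e-tight} in the middle step is a detour you do not need. The third bullet of Lemma~\ref{lem:p-bands} already tells you that $\rho^\pm$ miss the interiors of all $E$-cells, and since (by the form of $p$ in (\ref{eqn:amalgam-path}),(\ref{eqn:hnn-path})) the endpoints of the bases map to $x_0\in\twocomplx{A}$, the images of $\rho^\pm$ in $\twocomplx{H}$ are automatically loops contained in the vertex space $\twocomplx{A}$; no lift to $\wt{\twocomplx{G}}$ is required. The paper argues exactly this way.
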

\begin{proof}
  Suppose along $\rho^+$ we read $c^+$ and along $\rho^-$ we read
  $c^-$. The fact that we the path $c^+p^m (c^-)^\mo p^{-m}$ gives a
  closed loop in a Van-Kampen diagram over $\twocomplx{H}$ implies the
  equality\[ p^{-m}c^+p^m =c^-.\] By Lemma \ref{lem:p-bands} and by
  choice of $p$ (see (\ref{eqn:amalgam-path}) and
  (\ref{eqn:hnn-path})) the paths $\rho^+$ and $\rho^-$ are mapped to loops in
  $\twocomplx{A} \subset \twocomplx{H}$. It follows that the elements
  $c^+,c^-$ fix the vertex $v_0$ of $\T$. We also have that
  $p^{-m}c^+p^m$ fixes $p^{-m} \cdot v_0$ in $\T$, which means that
  $c^-$ fixes the whole segment $\sigma = [v_0,p^{-m} \cdot v_0]$ of
  $\T$. Since $|\sigma|>\kappa$ the assumption that $c^-$ is not
  trivial contradicts $\kappa$-acylindricity.
\end{proof}

\begin{defn}
  A $p$-band such that along its bases we can read $p^m$ with $m \geq
  \kappa$ is called \define{long}.
\end{defn}

This next result is obvious from the previous definition and Lemma
\ref{lem:1-sides}.

\begin{cor}\label{cor:long-bands}
  Adding singular cells if necessary while preserving preserving
  $E$-tightness, we can arrange so that a long band along whose bases
  we can read $p^m$ can be be obtained by glueing $m$ discs along
  whose boundary we can read \[p * 1 * p^\mo *1\] by their edges
  labeled $1$.
\end{cor}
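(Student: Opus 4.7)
The plan is to exhibit explicit $1$-labeled vertical arcs that slice a long $p$-band into $m$ congruent sub-discs, each with boundary reading $p * 1 * p^{-1} * 1$. First I apply Lemma \ref{lem:1-sides} to conclude that the two sides $\rho^+$ and $\rho^-$ of the band read the trivial element of $\H$, and in particular are null-homotopic loops lying in $\twocomplx{A}$. It therefore suffices to introduce, at each of the $m-1$ interior break points where a fresh copy of $p$ is about to begin on either base, a single edge labeled $1$ joining the two corresponding points of the two bases, and then to replace each of $\rho^\pm$ by a single $1$-labeled edge via singular cells (as in Figure \ref{fig:null-cells}).

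For $i \in \{1,\ldots,m-1\}$, let $p_i^r \in \rho_r$ and $p_i^s \in \rho_s$ denote the points reached by reading $p^i$ along each base starting from the $\rho^-$-endpoint. By part (2) of Lemma \ref{lem:matchup}, the lifts of $\rho_r, \rho_s$ are mapped by $\res{\phi}$ onto the two dual stable periodic cores, which share the line $\lambda' = h\lambda$; the points $p_i^r, p_i^s$ therefore land on the same vertex $v_i \in \T$ (the $i$-th break point on $\lambda'$ measured from the common $\rho^-$-side endpoint). Lemma \ref{lem:e-tight}, applied to the ambient $3$-sided $2$-cell of $\twocomplx{G}$, then places $p_i^r, p_i^s$ in the same connected component of the band minus its $E$-cells, so there is an edge path $\gamma_i$ joining them entirely in $\twocomplx{A}$ that avoids every $E$-cell.

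To see that $\gamma_i$ represents the trivial element of $\H$, consider the loop formed by traversing $\rho_r$ from its $\rho^-$-endpoint to $p_i^r$, then $\gamma_i$ to $p_i^s$, then $\rho_s$ in reverse back to its $\rho^-$-endpoint, and finally $\rho^-$ in reverse to the starting corner. This loop bounds a sub-disc of the band and so represents $1 \in \H$; reading its label yields $p^i \cdot \gamma_i \cdot p^{-i} \cdot (\rho^-)^{-1}$, and since $\rho^-$ represents the identity we conclude that $\gamma_i$ does as well. We may therefore replace $\gamma_i$ (and analogously each of $\rho^\pm$) by a single $1$-labeled edge via singular cells, decomposing the band into $m$ sub-discs glued pairwise along their $1$-edges, each with boundary $p * 1 * p^{-1} * 1$.

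It remains to verify that $E$-tightness is preserved. Condition (1) is automatic since the chosen edge paths representing each generator $\phi(y_i)$ are unchanged. For condition (2), the total number of $E$-strips inside the band is unaltered: each replaced path ($\gamma_i$ or $\rho^\pm$) avoids $E$-cells by construction. The main obstacle in the argument is precisely the $E$-cell-free routing of $\gamma_i$ inside the band, and this is exactly where Lemma \ref{lem:e-tight}---our earlier investment in $E$-tightness---pays off.
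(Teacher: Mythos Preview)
Your argument is correct and is precisely the intended elaboration of the paper's one-line justification (``obvious from the previous definition and Lemma \ref{lem:1-sides}''): you use Lemma \ref{lem:1-sides} for $\rho^\pm$, route the intermediate arcs $\gamma_i$ via Lemma \ref{lem:e-tight}, and trivialize them by the same conjugation trick. The only nitpick is that Lemma \ref{lem:e-tight} literally places $p_i^r,p_i^s$ in the same component of the ambient $2$-cell minus its $E$-cells rather than of the band itself, but since every $E$-strip in the band must run from $\rho_r$ to $\rho_s$ (both $\rho^\pm$ avoid $E$-cells by construction) the restriction to the band is immediate.
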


We are now in a position to use Bulitko's original idea and to prove
the main Theorem.

\begin{proof}[proof of Theorem \ref{thm:main}]
  Let everything be as described so far in this section. Recall that
  we want to globally bound the $\lambda$-periodicity of every $\lambda$-periodic
  subsegment of each arc $[v_0,\phi(y_i)v_0]$ where $y_i \in Y$. \emph{By Proposition \ref{prop:periodic-core} it is enough to bound the $\lambda$-periodicities of the $(\lambda,\lambda_0)$-stable periodic cores.} Note that we will not bound the \emph{number} of $\lambda$-periodic
  subsegment.

  To each $y_i \in Y$ we have the tuple $(m^i_1,\ldots,m^i_{n_i})$ of
  $p$-stable exponents corresponding to the $p$-stable reduced form
  for $\phi(y_i)$. For a fixed $p$ these exponents are well
  defined. Each 2-cell $D_{ijk} \subset \twocomplx{G}$ (along whose
  boundary we read $y_iy_jy_k$) induces a system of linear equations
  $(\star)_{ijk}$ over the positive integers with positive integral
  coefficients and constants. The variables are the entries in the
  tuple
  \[{\bf x}_{ijk} = (x^{i}_1,\ldots,x^i_{n_i}, x^{j}_1,\ldots,x^j_{n_j},
  x^k_1,\ldots,x^k_{n_k})\] corresponding to the tuple 
  \[{\bf m}_{ijk} = (m^{i}_1,\ldots,m^i_{n_i}, m^{j}_1,\ldots,m^j_{n_j},
  m^k_1,\ldots,m^k_{n_k}.)\] of $p$-stable exponents. We will now
  describe this system of equations.

  Denote by $\rho^r_l$ the $p-$stable subsegment in $y_r$ corresponding
  to the $p$-stable exponent $m^r_l$. Consider moreover the following
  system of equalities:\begin{itemize}
  \item[(0)] if $\rho^r_l$ and $\rho^s_{l'}$ are dual but neither of
    them is dominant then we have \[m^r_l = m^s_{l'}.\]
  \item[(1)] if $\rho^r_l$ has a unique dual $\rho^s_{l'}$ and it is
    dominant then we have \[m^r_l = m^s_{l'} + C^{rs}_{ll'}\]
  \item[(2)] if $\rho^r_l$ has duals $\rho^s_{l'}$ and $\rho^q_{l'}$ then we
    have \[m^r_l = m^s_{l'} + m^q_{l''} + C^{rsq}_{ll'l''}\]
  \item[(3)] if $\rho^r_l$ has no duals then we have \[m^r_l = C^r_l\]
  \item[(4)] if $m^r_l < \kappa$ then we write \[m^r_l = C^r_l\]
\end{itemize}

We call the terms $C^*_*$ \define{constants}. By Proposition
\ref{prop:domination-bound} and by (4), the constants in these
equations are all bounded by $2(\kappa+4)$. If we replace each
occurrence of $m^r_l$ by the corresponding $x^r_l$ in this system of
equalities then we get a system of equations $(\star)_{ijk}$ with
unknowns from ${\bf x}_{ijk}$. Moreover the tuple ${\bf m}_{ijk}$ is a
solution to this system of equations.

Let $(\star)$ be the system of equations consisting of all the
$(\star)_{ijk}$. We define an equivalence relation $\sim$ on the set
of $p$-stable subarcs of $\twocomplx{G}^{(1)}\cap\refine{G}{\phi}$ and
variables of $(\star)$ as follows:\begin{itemize}
  \item if $\rho_l^r$ and $\rho_l'^s$ lie in the boundary of a common 2-cell
    $D_{ijk}$ and give an equation of type (0), then we write
    $\rho_l^r \sim \rho_l'^s$.
  \item extend $\sim$ to an equivalence relation.
  \item each variable $x_l^r$ corresponds a unique $p$-stable subarc
    $\rho_l^r$ so we define $\sim$ on the set of variables analogously.
  \end{itemize}

  It is first clear that we can obtain another system of equations
  with fewer variables by replacing in $(\star)$ each variable by its
  $\sim$-class and removing redundant equations. Also if a variable
  occurs in an equation of type (4) then we can simply replace its
  whole $\sim$-class with the appropriate constant. We call this new
  system of equations $(\dagger)$.

  We already saw that the homomorphism $\phi:G\rightarrow \H$, and more
  particularly $\odiag{\phi}$ gave a solution ${\bf m}$ to
  $(\dagger)$. Suppose we were given another positive integral
  solution ${\bf m}''$to $(\dagger)$. First note that by equations of
  type (4), we may assume that all variables in $(\dagger)$ correspond
  to \emph{long} $p$-stable subarcs in
  $\twocomplx{G}^{(1)}\cap\refine{G}{\phi}$. This means that Corollary
  \ref{cor:long-bands} applies. Since ${\bf m}''$ is a solution to
  $(\dagger)$ it means that in each 2-cell of $\twocomplx{G}$ we can
  either surger in or cut out pieces of $p$-bands as in Figure
  \ref{fig:p-band-surgery}.
  
  \begin{figure}
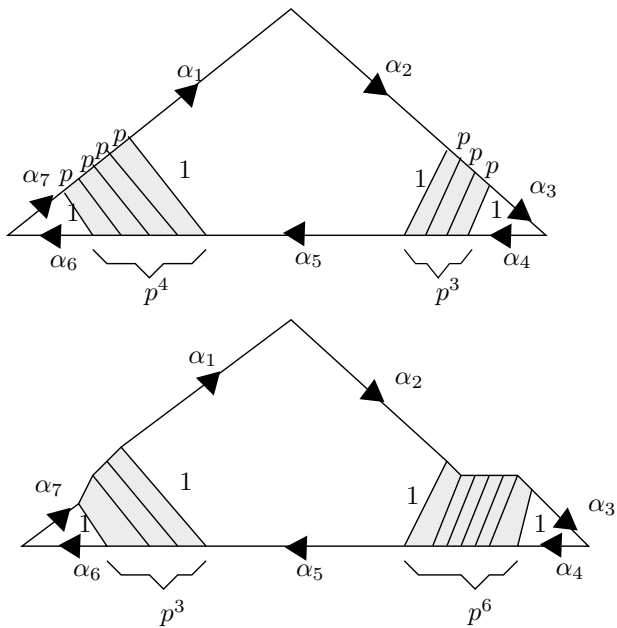

    \psfrag{1}{$1$}
    \psfrag{p}{$p$} 
    \psfrag{p3}{$p^3$} 
    \psfrag{p4}{$p^4$} 
    \psfrag{p6}{$p^6$} 
    \psfrag{a7}{$\alpha_7$}
    \psfrag{a1}{$\alpha_1$}
    \psfrag{a2}{$\alpha_2$}
    \psfrag{a3}{$\alpha_3$}
    \psfrag{a4}{$\alpha_4$}
    \psfrag{a5}{$\alpha_5$}
    \psfrag{a6}{$\alpha_6$}
    \centering
    \insertimage{p-band-surgery}{0.66}
    \caption{Doing surgery on $p$-bands. The labels of the arcs
      $\alpha_i$ remains unchanged and this is still a Van-Kampen diagram}
    \label{fig:p-band-surgery}
  \end{figure}

  Note that this gives another Ol'shanskii diagram of $\twocomplx{G}$
  over $\twocomplx{H}$. So by Lemma \ref{lem:o-diags} there is a
  corresponding homomorphism $\phi'':G\rightarrow \H$ realizing this
  new Ol'shanskii diagram $\odiag{\phi''}$. $\phi''$ satisfies all the
  requirements of the $\phi'$ given in Theorem \ref{thm:main} except
  for the bound on $\lambda$-periodicities. Note however that each $y _i \in
  Y$ is still sent to a path in $p$-stable reduced form, but that the
  $\lambda$-periodicity of the long $p$-stable occurrences are now given by the
  entries in the new solution ${\bf m}''$. \emph{It therefore follows
    that all that remains is to find another solution ${\bf m}'$ to
    $(\dagger)$ where the size of the largest entry is controlled.}
  
  We first note that $(\dagger)$ decomposes into a union of
  independent subsystems of equations which divide into two types:
  \begin{itemize}
  \item[(a)] $x = x$
  \item[(b)] A system of equations containing an equation of type (2)
    or (3)
  \end{itemize}
  
  There is no way to bound the number of equations of type (a),
  however we immediately see that $x \mapsto 1$ is a positive integral
  solution to these equations. Let $(\dagger)'$ be the system of
  equations consisting of all subsystems of equations of type (b). By
  Corollary \ref{cor:branch-point} hand for each 2-cell $D_{ijk}$
  contributes at most one equation of type (2) and contributes at most
  three equations of type (1). Hence the number of equations in
  $(\dagger)'$ is bounded above by $4|S|$ (assuming $S$ is
  triangular). Moreover from the definition of the $(\star)_{ijk}$
  each equation consists of at most four terms which are either
  variables or constants and the coefficients are all 1. Moreover all
  the constants are bounded above by $2(\kappa + 4)$.
  
  We now show how to compute $\func:\N \times \N \times \N \rightarrow
  \N$ as given in Theorem \ref{thm:main}. Up to choice of alphabet
  there is a finite number $N(g,r)$ of finite presentations with $g$
  generators and $r$ relations. This gives us a finite set \[\{\bk{Y_i
    \mid S_i } \mid i = 1, \ldots N(g,r)\}\] of corresponding
  \emph{triangular} presentations (triangulating a presentation can be done algorithmically.) Let \[K(g,r) = \max_{i=1\ldots
    N(g,r)}(|S_i|)\] Then there is a finite set $\alleqns{g}{r}{\kappa}$
  consisting of all systems of equations such that:
  \begin{itemize}
  \item There are at most $4K(g,r)$ equations.
  \item Each equation has at most 4 terms and all coefficients are 1 so each term is just  variable or a constant.
  \item Each constant is positive and bounded above by $2(\kappa+4).$
   \end{itemize}
  
  For each system of equations $(\star)_j \in
  \alleqns{g}{r}{\kappa}$ that has a positive integral solution,
  compute such a solution ${\bf v_j}$ (this can be done, it's just linear programming.) Among all the entries in all
  the ${\bf v_j}$ we take $\func'(g,r,\kappa)$ to be the
  maximum. We let $\func(g,r,\kappa) = \func'(g,r,\kappa) + 2(\kappa + 4)+1$. This $\func$ is clearly computable and satisfies the requirements of Theorem \ref{thm:main}.
\end{proof}

\bibliographystyle{alpha}\bibliography{biblio.bib}

\end{document}